\journal{}
\newcommand{\y}{{\mathbf y}}
\newcommand{\K}{{\mathbf K}}
\newcommand{\f}{{\mathbf f}}
\newcommand{\x}{{\mathbf x}}
\newcommand{\bpsi}{{\boldsymbol{\psi}}}
\def\st{\text{s.t.}}
\newcommand{\z}{{\mathbf z}}
\newcommand{\w}{{\mathbf w}}
\newcommand{\Name}{{AURORA}} 
\theoremstyle{plain}
\newtheorem{thm}{Theorem}[section]
\newtheorem{prop}[thm]{Proposition}
\theoremstyle{definition}
\newtheorem{defn}{Definition}[section]
\begin{document}

\begin{frontmatter}



\title{An automatic $L_1$-based regularization method for the analysis of FFC dispersion profiles with quadrupolar peaks}

 \author[label1]{G. Landi}
 \author[label1]{G.V. Spinelli}
 \author[label1]{F. Zama$^*$}
 \author[label2]{D. Chillura Martino}
 \author[label3]{P. Conte}
 \author[label2]{P. Lo Meo}
 \author[label4]{V. Bortolotti}
 \affiliation[label1]{organization={Department of Mathematics, University of Bologna},
            country={Italy}}

\affiliation[label2]{organization={Department of Biological,  Chemical and Pharmaceutical Sciences and Technologies, University of Palermo},
             country={Italy}}

 \affiliation[label3]{organization={Department of Agricultural, Food and Forest Sciences, University of Palermo},
             country={Italy}}

 \affiliation[label4]{organization={Department of Civil, Chemical, Environmental, and Materials Engineering, University of Bologna},
             country={Italy}}



\begin{abstract}
Fast Field-Cycling Nuclear Magnetic Resonance relaxometry
is a non-de\-struc\-tive technique to investigate molecular dynamics and structure of systems having a wide range of applications such as environment, biology, and food. Besides a considerable amount of literature about modeling and  application of such technique in specific areas, an algorithmic approach
to the related parameter identification problem is still lacking. We believe that a robust algorithmic approach will allow a unified treatment of different samples in several application areas.
In this paper, we model the parameters identification problem as a constrained $L_1$-regularized non-linear least squares problem.
Following the  approach  proposed in [\emph{Analytical Chemistry} \textbf{2021} 93 (24)], 
the non-linear least squares term imposes data consistency by decomposing the acquired relaxation profiles into relaxation contributions associated with ${^1H}-{^1H}$ and ${^1H}-{^{14}N}$ dipole-dipole interactions. The data fitting and the $L_1$-based regularization terms are balanced by the so-called regularization parameter.

For the parameters identification, we propose an algorithm that computes, at each iteration, both the regularization parameter and the model parameters. In particular, the regularization parameter value is updated according to a Balancing Principle and the model parameters values are obtained by solving the corresponding $L_1$-regularized non-linear least squares problem by means of the non-linear Gauss-Seidel method. We analyse the convergence properties of the proposed algorithm and run extensive testing on synthetic and real data. A Matlab software, implementing the presented algorithm, is available upon request to the authors.
\end{abstract}



\begin{keyword}
 parameter identification \sep $L_1$ regularization \sep non-linear Gauss-Seidel method \sep
 Fast Field Cycling NMR relaxation \sep  Free-model \sep quadrupole relaxation enhancement.



\end{keyword}

\end{frontmatter}



%
%
%
\section{Introduction} \label{Intro}
Fast Field-Cycling (FFC) Nuclear Magnetic Resonance (NMR) relaxometry is a non-de\-struc\-tive magnetic resonance technique which is particularly useful in revealing information on slow molecular dynamics, which can only be carried out at very low magnetic field strengths.
Standard NMR relaxation experiments are only performed in a relatively large fixed magnetic field that determines the resonance frequency of the molecules under investigation. Conversely, FFC-NMR relaxometry \cite{kimmich1997field,conte2021applications} provides relaxation studies in a remarkably wide frequency range from approximately 1 kHz to 120 MHz.
The FFC technique allows one to evaluate how the rate $R_1$ (also referred to as longitudinal relaxation rate)  
of a sample varies by changing the strength of an
applied magnetic field, so forming the NMR Dispersion  (NMRD) profiles. Therefore, FFC-NMR relaxometry measurements can detect the motion across a wide range of timescales (from millisecond to picoseconds) within an  experiment.
In addition, frequency-dependent relaxation studies have the exceptional potential to reveal the underlying mechanisms of molecular motion (not just its timescale).
Spin relaxation theory represents  relaxation rates  as linear combinations of spectral density functions (Fourier transform of the time correlation function) characterising the motional frequencies and their intensities present in the correlation function \cite{farrar2012pulse}.
However, complex spins dynamical interactions may occur such as the Quadrupole Relaxation Enhancement (QRE) due to their intramolecular magnetic dipolar coupling with quadrupole nuclei of arbitrary spins $S \geq 1$ \cite{fries2015simple,kruk2019dynamics}.
In the case of  
nitrogen-containing systems, for instance, the presence of QRE  is represented by local maxima or peaks of the $R_1$ profiles due to $^{1}H-{^{14}N}$ interactions.  The positions of the  peaks depend on the quadrupole parameters which are determined by the electric field gradient tensor at the $^{14}N$ position. Consequently, even subtle changes in the electronic structure around $^{14}N$  reflect in changes of the position and shape of the quadrupole peaks. The QRE is a very sensitive fingerprint of molecular arrangement which has a wide range of  applications ranging from environmental science \cite{jeoh2017two}, the study of ionic liquids, proteins \cite{kruk2019dynamics}
and food \cite{conte2021fast,ates2021field}.\\

Despite the consistent literature about the modeling of relaxation rate $R_1$ of protons fluids within a confined environment (see for instance \cite{korb2011nuclear, mitchell2013exploring, faux2017explicit}) and  applications of FFC-NMR (see for example \cite{kruk20201h} and references therein), the study of a computational framework for the automatization of the FFC-NMR analysis is still missing. To the authors' best knowledge, only P. Lo Meo et al. \cite{lo2021heuristic}  propose a computational approach where,
following the ``model-free'' approach introduced by \cite{halle1998model,halle2009physical}, the relaxation rate $R_1$ is represented as the sum of a constant term (offset) accounting for very ``fast'' molecular motion, a term describing proper $^1H-{^1H}$ relaxation as an integral function of the correlated time distribution function, and a non-linear term depending on several  characteristic parameters related to the QRE occurence.

Therefore, the analysis of the NMRD profiles requires the solution of a parameter identification problem dealing with the estimation of the offset term, the correlation time distribution and the QRE parameters. In the present contribution, we formulate the parameter identification problem as a
regularized non-linear least squares problem with box constraints
and we propose a completely automatic strategy for its solution. 
In particular, the objective function
contains a non-linear least squares term, imposing data consistency, and a $L_1$-based regularization term, accounting for the known sparsity of the correlation time distribution function. These terms are balanced by the so-called regularization parameter. 
Physical constraints on the unknown parameters lead to bound constraints in the optimization problem.

The parameter identification problem crucially depends on the regularization parameter whose value has to be properly identified in order to perform a meaningful NMRD analysis.  
Therefore, our mathematical model depends on several parameters: the NMRD parameters (i.e. the offset, the correlation time distribution), the QRE parameters, and the regularization parameter.
The estimation of all these parameters is carried out by an iterative process where, at each iteration, the regularization parameter is computed according to a balancing principle \cite{Ito2011}. 
The NMRD and QRE parameters are estimated  solving the corresponding constrained optimization problem  by the constrained two-blocks non-linear Gauss-Seidel (GS) method \cite{GrippoSciandrone1999,GrippoSciandrone2000}, since the unknown  NMRD and QRE parameters can be naturally partitioned into two blocks. In the GS method, the objective function is iteratively minimized with respect to the offset and the correlation time distribution while the QRE parameters are held fixed; then, fixed the updated values for the offset and the correlation time distribution, the objective is minimized with respect to the QRE parameters. The first subproblem 
involves solving a  constrained linear least squares problem, obtained by the  {\em model-free} approach \cite{lo2021heuristic}, with an $L_1$ regularization term.
The second subproblem requires the solution of  a constrained non-linear least squares problem.

This computational approach, separating the contribution due to the offset and the relaxation distributions from the parameters of the quadrupolar relaxation, is able to provide a very accurate fit  not only of the overall NMRD profile, but also of the local maxima  due to the QRE.

Besides analysing the convergence of the proposed approach, we tested it on synthetic and real data aiming to illustrate the algorithm efficiency and its robustness to data noise.\\

The main contributions of the present paper can be  summarized as follows.
\begin{itemize}
\item  We formulate the problem of identifying the offset, the correlation time distribution and the QRE parameters from the NMRD profiles as a $L_1$-regularized non-linear least squares problem with box constraints related to physical properties of the parameters.
\item We derive an automatic procedure, named \Name \ 
(AUtomatic $L_1$-Regularized mOdel fRee Analysis) for the identification of \emph{all} the parameters of mathematical model, i.e, the NMRD parameters (the offset term, the correlation time distribution),  the QRE parameters and the regularization parameter and we analyse its convergence properties.
\item We prove the robustness of the proposed approach to data noise by testing it on synthetic and real NMRD profiles.
\end{itemize}
The remainder of this paper is organised as follows. In section \ref{mathmodel} we present the parameter identification problem; in section \ref{method} we introduce the solution method, analyse its properties and present the \Name\ algorithm.
The results from several numerical experiments are reported and discussed in section \ref{numres}. Finally, in section \ref{conclusions},
we draw some conclusions.

%
%
\section{The parameter identification problem} \label{mathmodel}
In the following, we first describe the continuous model for NMRD profiles, then we derive its discretization and, finally, we present the parameter identification problem.

\subsection{The continuous model of NMRD profiles}
Following the {\em model-free} approach, \cite{lo2021heuristic} proposes a model for the NMRD profiles $R_1$ made of three components:
\begin{equation}\label{eq:R1}
R_1(\omega)=R_0+R^{HH}(\omega)+R^{NH}(\omega)
\end{equation}
where $R_0$ is nonnegative offset  keeping into account very {\em fast} molecular motions, the term $R^{HH}(\omega)$ describes the 
correlation  distribution function $f(\tau)$ as: 
\begin{equation}
R^{HH}(\omega)=\int_0^{\infty}\left [\frac{\tau}{(1+(\omega \tau)^2)}+
\frac{4\tau}{(1+4(\omega \tau)^2)} \right ] f(\tau) \ d \tau
\label{eq:RHH}
\end{equation}
where $\tau$ is the correlation time, i.e., the average time required by a molecule to rotate one radiant or to move for a distance as large as its radius of gyration.
The term $R^{HN}(\omega)$ describes the occurrence of the {\em quadrupolar peaks} \cite{kruk20201h}:
\begin{multline}\label{eq:RNH}
  R^{HN}(\omega) =  C^{HN}\left(
             \begin{array}{ccc}
               \frac{1}{3}+\sin^2(\Theta)\cos^2(\Phi), & \frac{1}{3}+\sin^2(\Theta)\sin^2(\Phi), & \frac{1}{3}+\cos^2(\Theta) \\
             \end{array}
           \right) \cdot \\
           \left(
             \begin{array}{c}
               \displaystyle{\frac{\tau_Q}{1+(\omega-\omega_{-})^2\tau_Q^2} + \frac{\tau_Q}{1+(\omega+\omega_{-})^2\tau_Q^2}}\\
               \displaystyle{\frac{\tau_Q}{1+(\omega-\omega_{+})^2\tau_Q^2} + \frac{\tau_Q}{1+(\omega+\omega_{+})^2\tau_Q^2}} \\
               \displaystyle{\frac{\tau_Q}{1+(\omega-(\omega_{+}-\omega_{-}))^2\tau_Q^2} + \frac{\tau_Q}{1+(\omega+(\omega_{+}-\omega_{-}))^2\tau_Q^2}} \\
             \end{array}
           \right)
\end{multline}
where
\begin{itemize}
\item[i)] $C^{HN}$ refers to ``the gyromagnetic ratios and the average interaction distance of the nuclei'';
\item[ii)]  $\Theta$ and $\Phi$ are two angles accounting for  `` the orientation of the $^{1}H-^{14}N$ dipole-dipole axis with respect to
the principal axis system of the electric field gradient at the position of $^{14}N$ '';
\item[iii)] $\tau_Q$  is the correlation time for the $^1H-{^{14}N}$ quadrupolar interaction;
\item[iv)] $\omega_-$ and $\omega_+$ are the angular frequency position of the peaks on the NMRD profiles.
\end{itemize}
(We remark that in \eqref{eq:RNH} the $\cdot$ operator 
denotes the scalar product of two vectors.) 
\subsection{The discrete model for NMRD profiles}\label{DM}
Before describing the discretization of the continuous model \eqref{eq:R1}, let us introduce the following notation. Let $\boldsymbol{\omega}\in \mathbb{R}^m$ be the vector of the
$m$ Larmor angular frequency values ($\omega=2\pi\nu$, $\nu$ in Mhz) at which $R_1$ is evaluated, and let $\y\in \mathbb{R}^m$ be the corresponding observations vector, i.e., $y_i=R_1(\omega_i)$, \quad $i=1,\ldots,m$. Let $\f\in \mathbb{R}^n$ be the vector obtained by sampling $f(\tau)$  in $n$ logaritmically equispaced values $\tau_1, \ldots,  \tau_n$. 
Finally, let $\boldsymbol{\psi} \in \mathbb{R}^6$ be such that  $\psi_1\equiv C^{HN}$ , $\psi_2\equiv \sin^2(\Theta)$,  $\psi_3\equiv\sin^2(\Phi)$, $\psi_4 \equiv  \tau_Q$, $\psi_5 \equiv \omega_-$,  $\psi_6 \equiv \omega_+$.
The discrete model, obtained by discretizing the equations \eqref{eq:RHH} and \eqref{eq:RNH}, is
\begin{equation}\label{eq:invpb}
  \y=\mathcal{F}(\f,\boldsymbol{\psi},R_0) \equiv \mathcal{F}_1(\f)+ \mathcal{F}_2(\boldsymbol{\psi}) + R_0
\end{equation}
where $\mathcal{F}: \mathbb{R}^{n+6+1} \rightarrow \mathbb{R}^m$. The first term $\mathcal{F}_1: \mathbb{R}^{n} \rightarrow \mathbb{R}^m$, only depending on $\f$, is a linear function of $\f$ deriving from the discretization of the integral for $R^{HH}$ in \eqref{eq:RHH}:
\begin{equation}\label{}
  \mathcal{F}_1(\f) \equiv \K \f, \ \hbox{with } \K\in \mathbb{R}^{m \times n}, \ \   \f \in \mathbb{R}^{n}
\end{equation}
and
\begin{equation*}
  \K_{i,j}=\frac{\tau_j}{(1+(\omega_i \tau_j)^2)}+\frac{4\tau_j}{(1+4(\omega_i \tau_j)^2)}, \quad i=1,\ldots,m, \; j=1,\ldots,n.
\end{equation*}
In a typical FFC-NMR experiment, $m \ll n$.

The second term $\mathcal{F}_2(\boldsymbol{\psi}): \mathbb{R}^{6} \rightarrow \mathbb{R}^m$ represents the quadrupolar component $R^{HN}$ \eqref{eq:RNH} and only depends on the parameters $\psi_j$, $j=1,\ldots,6$:
\begin{multline}\label{}
   \left( \mathcal{F}_2(\boldsymbol{\psi}) \right)_i
    = \psi_1
    \left(
      \begin{array}{ccc}
        \frac{1}{3}+\psi_2(1-\psi_3), & \frac{1}{3}+\psi_2 \cdot \psi_3, & \frac{1}{3}+(1-\psi_2) \\
      \end{array}
    \right)
    \cdot \\
    \left(
      \begin{array}{c}
          \displaystyle{\frac{\psi_4}{1+( \omega_i-\psi_5)^2\psi_4^2}+\frac{\psi_4}{1+( \omega_i+\psi_5)^2\psi_4^2}}\\
          \displaystyle{\frac{\psi_4}{1+( \omega_i-\psi_6)^2\psi_4^2}+\frac{\psi_4}{1+( \omega_i+\psi_6)^2\psi_4^2}} \\
          \displaystyle{\frac{\psi_4}{1+( \omega_i-(\psi_6-\psi_5))^2\psi_4^2}+\frac{\psi_4}{1+( \omega_i+(\psi_6-\psi_5))^2\psi_4^2)}}
      \end{array}
    \right)
\end{multline}
for $i=1,\ldots,m$.

The last term in $\mathcal{F}$ is the constant parameter $R_0 \geq 0 $ representing the offset  in the NMRD curve.
\subsection{The parameter identification problem }\label{par:IP}
Mathematically, the problem of identifying the parameters $\f$, $\bpsi$ and $R_0$ from the observations $\y$ is an ill-conditioned non linear inverse problem \eqref{eq:invpb}. In order to stabilize the parameter identification procedure, we use a regularization approach adding some \emph{a priori} information on the unknown parameters. In particular, we use $L_1$ regu\-la\-ri\-zation to induce sparsity of $\f$ since the distribution $f(\tau)$ is known to be a sparse function with only a few non-null terms. Therefore, the parameter identification problem is reformulated as the following optimization problem
\begin{equation}\label{eq:IP}
    \begin{array}{ll}
    \min\limits_{\f, \boldsymbol{\psi}, R_0} & \displaystyle \| \y - (\mathcal{F}_1(\f)+ \mathcal{F}_2(\boldsymbol{\psi}) + R_0) \|_2^2 + \lambda \| \f \|_1 \\
    \st     & \f \geq \mathbf{0}, \\
            & \boldsymbol{\psi} \in \mathcal{B}_{\psi}, \\
            & R_0 \geq 0,
    \end{array}
\end{equation}
where the set $\mathcal{B}_{\psi}$ defines the box constraints on $\bpsi$:
\begin{equation}\label{eq:bounds}
  \mathcal{B}_{\psi} = \big \{ \boldsymbol{\psi } \, :  \,  \psi_1 \in [0,\bar{C}]; \ \psi_2, \psi_3 \in [0, 1]; \ \psi_4 \in [0,\bar{\tau}]; \
\psi_5, \psi_6 \in [ \omega_{\ell},  \omega_{u}] \big \}.
\end{equation}
The bounds on the parameters $\psi_i$, $i=1,\ldots,6$, can be derived from the physical properties of the system and from the data $\y$; a deeper discussion on this topic will be given in the Section \ref{numres}.

The regularization parameter $\lambda>0$  weights the contribution of the $L_1$ regularization term; the parameters  $(\f, \boldsymbol{\psi}, R_0)$  obtained by solving \eqref{eq:IP} depend critically on the value of $\lambda$.

%
%
\section{The solution method} \label{method}
The presented parameter identification method is an iterative procedure where, at each iteration, a value of the regularization parameter $\lambda$ is provided and the corresponding parameters
$(\f_\lambda, \boldsymbol{\psi}_\lambda, R_{0,_\lambda})$ are computed by solving problem \eqref{eq:IP}. The constrained two-blocks non-linear Gauss-Seidel (GS) method \cite{GrippoSciandrone1999,GrippoSciandrone2000} is used for its solution. In the following, we firstly describe the GS method and recall its convergence properties, then we introduce the iterative procedure for the regularization parameter computation, and, finally, we draw the overall parameter identification procedure.

\subsection{The constrained two-blocks Gauss-Seidel method}
In this subsection, we describe the GS method used for the solution of the constrained optimization problem \eqref{eq:IP}
for a fixed value of the regularization parameter $\lambda$.
To this end, we partition the unknowns of \eqref{eq:IP} into two blocks such that the data fitting term is linear with respect to the first block and non-linear with respect to the second block. Therefore, we reformulate problem \eqref{eq:IP} as follows:
\begin{equation}\label{eq:reformulatedIP}
    \begin{array}{ll}
    \min\limits_{\x_1, \x_2} & \displaystyle g(\x_1,\x_2) = \| \y - \K_e \x_1 - \mathcal{F}_2(\x_2)\|_2^2 + \lambda \| \x_1 \|_1 + \eta\|\x_1\|^2_2 \\
    \st     & \x_1 \in X_1, \\
            & \x_2 \in X_2,
    \end{array}
\end{equation}
where
\begin{gather}
  \x_1 \equiv (\f,R_0),  \quad  \x_2 \equiv \boldsymbol{\psi}  \\
  X_1=\{\x_1 \geq \mathbf{0}\}, \quad
  X_2\equiv \mathcal{B}_{\psi}
\end{gather}
and
\begin{equation}
  \ \K_e = \left[\begin{array}{c c}\K  & \mathbf{1}\end{array}\right]
   \in \mathbb{R}^{m\times (n+1)}.
\end{equation}
The last $L_2$-based penalty term $\eta\|\x_1\|^2_2$ in the objective function  has been introduced to ensure that $\K_e^T\K_e+\eta \mathbf{I}$ is a definite positive matrix; to this and, a small value for $\eta$, as $\eta=10^{-10}$ for example, can be fixed. Moreover, observe that in \eqref{eq:reformulatedIP}, the parameter $R_0$ has been included in the $L_1$-based penalty term.

The closed subsets $X_1\subseteq\mathbb{R}^{n+1}$ and $X_2\subseteq\mathbb{R}^6$ are both convex; the objective function $g(\x_1,\x_2)$ is continuous and it is convex with respect to $\x_1$ for fixed $\x_2$, but it is not convex with respect to $\x_2$ for fixed $\x_1$. However, since $\K_e^T\K_e+\eta \mathbf{I}$ is definite positive and $X_2$ is bounded, it is easy to show that $g$ is coercive on $X_1\times X_2$.
\begin{defn}
 A function $g:\mathbb{R}^q \rightarrow \mathbb{R}$ is called coercive in $X$ if, for every sequence $\{\x^{(k)}\} \in X$ such that $\|\x^{(k)}\|\rightarrow\infty$, we have
\begin{equation*}
  \lim_{k\rightarrow\infty} g(\x^{(k)}) = +\infty
\end{equation*}
\end{defn}
\begin{prop}
  The function $g:\mathbb{R}^{n+1+6} \rightarrow \mathbb{R}$ such that
  \begin{equation*}
      g(\x_1,\x_2) = \| \y - \K_e \x_1 - \mathcal{F}_2(\x_2)\|_2^2 + \lambda \| \x_1 \|_1 + \eta\|\x_1\|^2_2
  \end{equation*}
  is coercive in $X_1\times X_2$.
\end{prop}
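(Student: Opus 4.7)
The plan is to reduce the joint coercivity question to a coercivity question in $\x_1$ alone, using the boundedness of $X_2 = \mathcal{B}_\psi$, and then to bound $g$ from below by a function that blows up in $\|\x_1\|_2$.

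First I would observe that $X_2 = \mathcal{B}_\psi$, as defined in \eqref{eq:bounds}, is a product of bounded closed intervals, hence a compact subset of $\mathbb{R}^6$. Consequently, for any sequence $\{(\x_1^{(k)}, \x_2^{(k)})\} \subset X_1 \times X_2$ with $\|(\x_1^{(k)}, \x_2^{(k)})\| \to \infty$, the component $\{\x_2^{(k)}\}$ is bounded and therefore $\|\x_1^{(k)}\|_2 \to \infty$. Thus it suffices to show that $g(\x_1, \x_2) \to \infty$ whenever $\|\x_1\|_2 \to \infty$, uniformly in $\x_2 \in X_2$.

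Next I would exploit the simple fact that the data fitting term is non-negative and, because $\x_1 \in X_1$ means $\x_1 \geq \mathbf{0}$ and $\lambda > 0$, the penalty $\lambda\|\x_1\|_1$ is also non-negative. Dropping these non-negative terms from $g$ yields the pointwise lower bound
\begin{equation*}
g(\x_1, \x_2) \;\geq\; \eta\,\|\x_1\|_2^2 \qquad \forall\,(\x_1,\x_2)\in X_1\times X_2.
\end{equation*}
Combining this with the previous step immediately gives $g(\x_1^{(k)}, \x_2^{(k)}) \geq \eta\,\|\x_1^{(k)}\|_2^2 \to \infty$, which is coercivity on $X_1 \times X_2$.

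There is essentially no obstacle in the argument: the $\eta\|\x_1\|_2^2$ term already secures coercivity by itself, so one does not even need to invoke positive definiteness of $\K_e^T\K_e + \eta\mathbf{I}$ or continuity of $\mathcal{F}_2$ to complete the proof. An alternative route, more in the spirit of the comment preceding the proposition, would be to expand the residual square into $\x_1^T(\K_e^T\K_e+\eta\mathbf{I})\x_1 - 2\x_1^T\K_e^T(\y-\mathcal{F}_2(\x_2)) + \|\y-\mathcal{F}_2(\x_2)\|_2^2$, use the smallest eigenvalue $\sigma_{\min}>0$ of $\K_e^T\K_e+\eta\mathbf{I}$ to obtain the quadratic lower bound $\sigma_{\min}\|\x_1\|_2^2$, and dominate the linear cross term by $2\|\x_1\|_2\|\K_e\|(\|\y\|_2 + \sup_{\x_2 \in X_2}\|\mathcal{F}_2(\x_2)\|_2)$, the supremum being finite since $\mathcal{F}_2$ is continuous on the compact set $X_2$; this variant is slightly longer but reaches the same conclusion.
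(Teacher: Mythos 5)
Your proof is correct, and it is genuinely simpler than the one in the paper. The paper's argument is essentially your ``alternative route'': it expands the residual into the quadratic form $\x_1^T(\K_e^T\K_e+\eta\mathbf{I})\x_1 + 2\x_1^T\K_e^T(\mathcal{F}_2(\x_2)-\y) + \|\mathcal{F}_2(\x_2)-\y\|^2$, invokes the smallest eigenvalue $\mu>0$ of $\K_e^T\K_e+\eta\mathbf{I}$, and shows that for $k$ large the coefficient $\mu\|\x_1^{(k)}\| - 2\|\K_e^T(\mathcal{F}_2(\x_2^{(k)})-\y)\| + \lambda$ multiplying $\|\x_1^{(k)}\|$ is eventually positive, using the boundedness of $X_2$ to control the cross term. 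Your main route short-circuits all of this: since the residual term and $\lambda\|\x_1\|_1$ are nonnegative (note that $\|\x_1\|_1\geq 0$ holds regardless of the sign constraint, so the appeal to $\x_1\geq\mathbf{0}$ is not even needed), the single bound $g(\x_1,\x_2)\geq\eta\|\x_1\|_2^2$ together with boundedness of $X_2$ gives coercivity in one line, with no eigenvalue estimate and no continuity of $\mathcal{F}_2$. What your version buys is brevity and transparency: it makes explicit that coercivity here rests entirely on $\eta>0$ (indeed, since $m\ll n$ the matrix $\K_e^T\K_e$ alone may be singular, so the paper's $\mu>0$ also hinges on the $\eta\mathbf{I}$ shift). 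What the paper's version buys is a lower bound that exhibits the separate roles of the quadratic and $L_1$ terms; in particular it adapts with almost no change to the case $\eta=0$, where one would rely on $\lambda\|\x_1\|_1$ instead, whereas your shortcut would then need the analogous one-line replacement $g\geq\lambda\|\x_1\|_1\geq\lambda\|\x_1\|_2$. Both arguments are sound.
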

\begin{proof}
   The function $g$ can be rewritten as
   \begin{equation*}
     g(\x_1,\x_2) = \x_1^T (\K_e^T\K_e + \eta \mathbf{I} )\x_1 + 2 \x_1^T \K_e^T (\mathcal{F}_2(\x_2)-\y) + \|\mathcal{F}_2(\x_2)-\y\|^2 +\lambda \|\x_1\|_1
   \end{equation*}
   where $\K_e^T\K_e + \eta \mathbf{I}$ is positive definite.
Let $\{(\x_1^{(k)},\x_2^{(k)})\}$ be a sequence in $X_1\times X_2$ such that $\lim_{k\rightarrow\infty} \|(\x_1^{(k)},\x_2^{(k)})\| = \infty.$ Since $X_2$ is bounded, we have
   \begin{equation}\label{eq:x1x2}
     \lim_{k\rightarrow\infty} \|\x_1^{(k)}\| = \infty \quad \text{and} \quad \lim_{k\rightarrow\infty} \|\x_2^{(k)}\| < \infty .
   \end{equation}
   Let $\mu>0$ be the smallest eigenvalue of $\K_e^T\K_e + \eta \mathbf{I}$. It holds
   \begin{align*}
     g(\x_1^{(k)},\x_2^{(k)})  & \geq  \mu \|\x_1^{(k)}\|^2 - 2 \|\K_e^T (\mathcal{F}_2(\x_2^{(k)})-\y)\|\|\x_1^{(k)}\| +\lambda \|\x_1^{(k)}\| + \\
     & \quad +\|\K_e^T (\mathcal{F}_2(\x_2^{(k)})-\y)\|^2 \\
                   & \geq \left( \mu\|\x_1^{(k)}\| - 2 \|\K_e^T (\mathcal{F}_2(\x_2^{(k)})-\y)\| + \lambda\right) \|\x_1^{(k)}\|
   \end{align*}
   From \eqref{eq:x1x2} we have $\mu\|\x_1^{(k)}\| - 2 \|\K_e^T (\mathcal{F}_2(\x_2^{(k)})-\y)\| + \lambda >0$ for sufficiently large $k$, then
   \begin{equation*}
  \lim_{k\rightarrow\infty} g(\x_1^{(k)},\x_2^{(k)}) = +\infty
\end{equation*}

\end{proof}
Continuity and coerciveness ensure the existence of at least one global minimizer of $g(\x_1,\x_2)$ in $X_1\times X_2$ \cite{ber99}.

In the constrained two-blocks Gauss-Seidel method, at each iteration, the objective function is minimized with respect to each of the block coordinate vectors $\x_i$ over the subsets $X_i$,  $i=1,2$, as summarized in Algorithm \ref{alg:GS}, 
where the  convergence condition is:
\begin{equation}
 |g(\x_1^{(k)},\x_2^{(k)} )-  g(\x_1^{(k-1)},\x_2^{(k-1)}) | \leq Tol_{GS} |g(\x_1^{(k)},\x_2^{(k)} )|.
 \label{eq:GS-conv}
 \end{equation}
%
%
\begin{algorithm}[h!]
\caption{Constrained two-blocks non-linear Gauss-Seidel method}\label{alg:GS}
\begin{algorithmic}[1]
\Function{GS}{$\x_1^{(0)},\x_2^{(0)}$}
\State Set $k=0$ and  $\x^{(0)}=(\x_1^{(0)},\x_2^{(0)})$.
\Repeat
\State $k=k+1$
\State Set  $\x_1^{(k)} \in \arg \min\limits_{\z \in X_1} \; g(\z,\x_2^{(k-1)})$
\State Set $\x_2^{(k)} \in \arg \min\limits_{\z \in X_2} \; g(\x_1^{(k)},\z)$
\Until{{\tt convergence condition} \eqref{eq:GS-conv}}
\State \textbf{return} $(\x_1^{(k)},\x_2^{(k)})$ 
\EndFunction
\end{algorithmic}
\end{algorithm}

%
%

  %
  %
  %
  %
%
%
We observe that the GS method is well defined since each subproblem  has solutions. Indeed, the function $g$ is strictly convex with respect of $\x_1$ and hence there exist at most one global minimum of $f$ over $X_1$ for fixed $\x_2$. On the other hand, Weierstrass's theorem guarantees the existence of at least one global minimum of $g$ over $X_2$ for fixed $\x_1$ since $g$ is continuous and $X_2$ is a closed and bounded set.

For general nonconvex, constrained problems, convergence of sequences generated by the GS method to critical points has been proved in \cite{GrippoSciandrone2000}. For the reader's convenience, we report here the main convergence result for the GS method and we refer to \cite{GrippoSciandrone2000} for its proof.
\begin{thm}
  Consider the problem
  \begin{equation}
    \begin{array}{ll}
    \min\limits_{\x_1, \x_2} & \displaystyle g(\x_1,\x_2) \\
    \st     & \x_1 \in X_1, \\
            & \x_2 \in X_2,
    \end{array}
\end{equation}
where $g$ is a continuously differentiable function and the subsets $X_i$ are closed, nonempty and convex for $i=1,2$.
Suppose that the sequence $\{(\x_1^{(k)},\x_2^{(k)})\}$ generated by the two-blocks GS method has limit points. Then, every limit point of $\{(\x_1^{(k)},\x_2^{(k)})\}$ is a critical point of the problem.
\end{thm}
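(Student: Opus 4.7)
The strategy I would follow rests on three ingredients: (i) monotonicity of the objective along the iterates, (ii) continuity of $g$ and $\nabla g$, and (iii) careful passage to the limit along subsequences. Since the two block updates solve exact minimization subproblems on $X_1$, $X_2$, the chain of inequalities
\begin{equation*}
g(\x_1^{(k+1)},\x_2^{(k+1)}) \leq g(\x_1^{(k+1)},\x_2^{(k)}) \leq g(\x_1^{(k)},\x_2^{(k)})
\end{equation*}
shows that $\{g(\x_1^{(k)},\x_2^{(k)})\}$ is non-increasing. If $(\bar{\x}_1,\bar{\x}_2)$ is a limit point along a subsequence $K$, continuity of $g$ together with monotonicity force $g(\x_1^{(k)},\x_2^{(k)}) \to g(\bar{\x}_1,\bar{\x}_2)$ for the \emph{whole} sequence, and in particular $g(\x_1^{(k+1)},\x_2^{(k)}) \to g(\bar{\x}_1,\bar{\x}_2)$ by squeezing.

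The optimality condition at $\bar{\x}_2$ is then immediate: since $\x_2^{(k)} \in \arg\min_{\z \in X_2} g(\x_1^{(k)},\z)$ on the convex set $X_2$, the first-order necessary condition gives $\langle \nabla_{\x_2} g(\x_1^{(k)},\x_2^{(k)}), \z-\x_2^{(k)}\rangle \geq 0$ for every $\z\in X_2$, and taking $k\to\infty$ along $K$ and using continuity of $\nabla_{\x_2} g$ yields the analogous inequality at $(\bar{\x}_1,\bar{\x}_2)$. For the first block one has the analogous inequality $\langle \nabla_{\x_1} g(\x_1^{(k)},\x_2^{(k-1)}), \z-\x_1^{(k)}\rangle \geq 0$ for $\z \in X_1$; the plan is to extract a further subsequence $K' \subseteq K$ along which $\x_2^{(k-1)}$ also converges (say to $\tilde{\x}_2 \in X_2$) and $\x_1^{(k)}$ converges to $\bar{\x}_1$, then pass to the limit to obtain critical-point conditions relative to $\tilde{\x}_2$ and finally identify $\tilde{\x}_2$ with $\bar{\x}_2$.

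The main obstacle is precisely this last step: a priori $\x_2^{(k-1)}$ and $\x_2^{(k)}$ need not share the same limit along $K$, which is the classical difficulty of block coordinate descent (Powell's counterexample shows the analogous claim fails outright for three or more blocks). To resolve it in the two-block setting I would proceed by contradiction: if $\tilde{\x}_2 \neq \bar{\x}_2$, then the value identity $g(\bar{\x}_1,\tilde{\x}_2)=g(\bar{\x}_1,\bar{\x}_2)$ inherited from the squeezing argument, combined with the fact that $\x_1^{(k)}$ globally minimizes $g(\cdot,\x_2^{(k-1)})$ on $X_1$, would let me compare $g(\bar{\x}_1,\tilde{\x}_2)$ with $g(\x_1^{\star},\tilde{\x}_2)$ for any test point $\x_1^{\star}\in X_1$ and deduce that $\bar{\x}_1$ is also a minimizer of $g(\cdot,\tilde{\x}_2)$; carrying this through yields the desired stationarity inequality at $(\bar{\x}_1,\bar{\x}_2)$ itself. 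Continuity of $\nabla g$ then packages both block inequalities into the claim that $(\bar{\x}_1,\bar{\x}_2)$ is a critical point of the constrained problem, completing the proof along the lines of \cite{GrippoSciandrone2000}.
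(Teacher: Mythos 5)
First, note that the paper does not prove this theorem at all: it states it for the reader's convenience and defers the proof to \cite{GrippoSciandrone2000}, so your attempt has to be measured against that standard argument. Your first two ingredients are sound: the monotonicity chain, the convergence of the whole value sequence to $g(\bar{\x}_1,\bar{\x}_2)$ by monotonicity plus continuity, the squeeze giving $g(\x_1^{(k+1)},\x_2^{(k)})\to g(\bar{\x}_1,\bar{\x}_2)$, and the passage to the limit in the block-2 variational inequality are all correct.

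The gap is in the block-1 step, and it is exactly the step you flag as the ``main obstacle'': your plan does not close it. Extracting a further subsequence along which $\x_2^{(k-1)}\to\tilde{\x}_2$ already needs boundedness of that subsequence, which the hypothesis ``the sequence has limit points'' does not supply in the general setting of the theorem (it happens to hold in the paper's application because $X_2$ is compact, but not in the statement as given). Worse, even granting $\tilde{\x}_2$, your contradiction argument only yields that $\bar{\x}_1$ minimizes $g(\cdot,\tilde{\x}_2)$ over $X_1$ and that $g(\bar{\x}_1,\tilde{\x}_2)=g(\bar{\x}_1,\bar{\x}_2)$; neither fact transfers the variational inequality from $\tilde{\x}_2$ to $\bar{\x}_2$, and without uniqueness or quasiconvexity assumptions there is no way to ``identify $\tilde{\x}_2$ with $\bar{\x}_2$'' (the subproblem minimizers need not be unique, so consecutive $\x_2$-iterates can cluster at different points). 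The standard two-block proof in \cite{GrippoSciandrone2000} avoids the backward pairing altogether by looking forward: for every $\z\in X_1$ one has $g(\x_1^{(k+1)},\x_2^{(k)})\le g(\z,\x_2^{(k)})$, and along the subsequence $K$ the right-hand side tends to $g(\z,\bar{\x}_2)$ while the left-hand side tends to $g(\bar{\x}_1,\bar{\x}_2)$ by the squeeze you already established; hence $g(\bar{\x}_1,\bar{\x}_2)\le g(\z,\bar{\x}_2)$ for all $\z\in X_1$, which gives the block-1 first-order condition at $(\bar{\x}_1,\bar{\x}_2)$ directly, with no further subsequence, no boundedness assumption, and no need for $\x_1^{(k+1)}$ itself to converge. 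Replacing your last paragraph with this forward-pairing argument repairs the proof.
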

We have already observed that the objective function $g$ of \eqref{eq:reformulatedIP} is coercive; since the level sets of continuous coercive functions are compact, the sequence
$\{(\x_1^{(k)},\x_2^{(k)})\}$ generated by the GS method has limit points (eventually, it has a convergent subsequence); hence, the GS method converges to critical points of \eqref{eq:reformulatedIP}.

We conclude this subsection with a remark on the solution of the two constrained subproblems to be solved at each iteration of algorithm \ref{alg:GS}.
The first subproblem at step 3 is a $L_1$-regularized least squares problem with nonnegativity constraints:
\begin{equation}
    \begin{array}{ll}
    \min\limits_{\z} & \displaystyle  \| \w-\K_e\z \|_2^2 + \lambda \sum_{i=1}^{m+1} z_i \\
    \st     & z_i\geq 0, \quad i=1,\ldots,m+1
    \end{array}
\end{equation}
where $\w = \y-\mathcal{F}_2(\x_2^{(k)})$.
For its solution, we use the truncated Newton interior-point method described in \cite{Kim2007}.

The second subproblem at step 4 is a bound constrained non-linear least squares problem:
\begin{equation}\label{eq:x2}
    \begin{array}{ll}
    \min\limits_{\z} & \displaystyle  \|\mathcal{F}_2(\boldsymbol{\z}) - \w \|^2 \\
    \st     & \z \in X_2
    \end{array}
\end{equation}
where $\w = \K_e\x_1^{(k+1)}-\y$. For its solution, we use the Newton Projection method \cite{Bersekas1982,Bersekas1984} where the Hessian matrix is approximated as in the Levenberg-Marquardt method \cite{NocedalWright06} since the Jacobian of $\mathcal{F}_2$ is ill-conditioned.


\subsection{Computation of the regularization parameter $\lambda$}
In order to correctly analyse the NMRD profiles, it is necessary to choose an appropriate value for the regularization parameter $\lambda$. Even if several parameter selection rules have been proposed in the literature for $L_2$-regularized minimization problems (see \cite{EnglBook2000,hansen1998,vogel02} for a theoretical discussion of such rules), the case of $L_1$-based regularization still remains largely unexplored. In \cite{Bonesky2008,clason2012}, the discrepancy principle has been investigated for nonsmooth regularization. This principle is difficult to be realized since it requires the prior knowledge of the noise norm and a solution of the discrepancy equation is not guaranteed to exist. In \cite{Ito2011}, the Balancing Principle (BP) has been proposed where the regularization parameter is selected by balancing, up to a multiplicative factor $\gamma$, the data fidelity and the regularization term, i.e,:
\begin{equation}\label{BP}
   \gamma  \lambda \| \x_1 \|_1 = \| \y - \K_e \x_1 - \mathcal{F}_2(\x_2)\|_2^2+\eta \|\x_1\|^2_2
\end{equation}
The regularization properties of the BP has been deeply investigated and a convergent fixed-point iterative scheme for its realization has been proposed in \cite{Ito2011}. We set $\gamma=1$ which gives the following rule for the regularization parameter selection:
\begin{equation*}
  \lambda = \displaystyle \frac{\| \y - \K_e \x_1 - \mathcal{F}_2(\x_2)\|_2^2+\eta \|\x_1\|^2_2}{\| \x_1 \|_1  }.
\end{equation*}

\subsection{The parameter identification method}
The proposed iterative method for the identification of both the NMRD parameters $\f$, $\bpsi$ and $R_0$ and the regularization parameter $\lambda$ is 
outlined in algorithm \ref{alg:PI} where, given an initial guess for $\lambda$, at each iteration, the NMRD parameters are computed by solving 
 problem \eqref{eq:IP} by the GS method, and the regularization parameter value is updated by the BP until the following convergence condition is met:
\begin{equation}
|\lambda^{(k+1)}- \lambda^{(k)} | \leq Tol_{\lambda} | \lambda^{(k)}|, \ \ 
Tol_{\lambda} >0.
\label{eq:lam-rule}
\end{equation}
We refer to this method as \Name \ (AUtomatic $L_1$-Regularized mOdel fRee Analysis). 

\begin{algorithm}[h!] \caption{\Name \  \label{alg:PI}}
\vskip 1mm
\begin{spacing}{1.15}
\begin{algorithmic}[1]
\State  Set $k=0$, $\eta=10^{-10}$ and choose a starting guess $\lambda^{(0)}$.
\Repeat
  \State $k=k+1$ 
  \State {\bf NMRD and QRE parameters update}
  $$ \hbox{By algorithm \ref{alg:GS} compute } (\x_2^{(k)},\x_2^{(k)})=GS(\x_1^{(k-1)},\x_2^{(k-1)}) \ \hbox{i.e.}$$ 
  $$
    (\x_1^{(k)},\x_2^{(k)}) \in \arg\min\limits_{\substack{\x_1\in X_1 \\ \x_2\in X_2}}
    \displaystyle  \; \| \y - \K_e \x_1 - \mathcal{F}_2(\x_2)\|_2^2 + \lambda^{(k)} \| \x_1 \|_1+\eta\|\x_2\|^2_2$$
  %
  \State \textbf{Regularization parameter update} $$\displaystyle \lambda^{(k+1)}=\frac{\| \y - \K_e \x_1^{(k)} - \mathcal{F}_2(\x_2^{(k)})\|_2^2+\eta\|\x_2^{(k)}\|^2_2}{\|\x_1^{(k)}\|_1}$$ 
\Until{{\tt convergence condition} \eqref{eq:lam-rule}}
\State \textbf{return} $(\f,R_0)=\x_1^{(k)}$ and $\bpsi = \x_2^{(k)}$ \Comment{Result $(\f,R_0,\bpsi)$}
\end{algorithmic}
\end{spacing}
\end{algorithm}
Following the analysis of the BP performed in \cite{Ito2011}, algorithm \Name\ can be viewed as a fixed point-like scheme for the problem  
\begin{equation}\label{}
     \begin{array}{l}
    (\x_1^*,\x_2^*) = \text{arg}\min\limits_{\substack{\x_1\in X_1 \\ \x_2\in X_2}}  \ \| \y - \K_e \x_1 - \mathcal{F}_2(\x_2)\|_2^2 + \lambda^* \| \x_1 \|_1 + \eta\|\x_1\|^2_2 , \\
    \lambda^* = \displaystyle \frac{\| \y - \K_e \x_1^* - \mathcal{F}_2(\x_2^*)\|_2^2+\eta \|\x_1^*\|^2_2}{\| \x_1^* \|_1  } .
    \end{array}
\end{equation}
The monotone convergence of the sequence $\{\lambda^{(k)}\}$ generated by the fixed point scheme has been proved in \cite{Ito2011} when $\lambda^{(0)}$ is chosen in an interval containing only one solution of equation \eqref{BP}. 
%
%
\section{Results and Discussion} \label{numres}
In this section, we present and discuss the results obtained  by a set of numerical experiments
to assess the accuracy,  robustness and efficiency of the 
proposed algorithm.
In paragraph  \ref{par:sett}, we describe the experimental setting. In paragraph \ref{par:robu}, we test \Name \ on a synthetic NMRD  profile  $R_1$
computed by  the model \eqref{eq:R1}
with assigned values of the  parameters $\boldsymbol{\psi}$,  $\f$ and $R_0$.  
We evaluate computational efficiency  and accuracy of \Name \ comparing it with some algorithms available in the Matlab optimization Toolbox.
Moreover, we investigate the algorithm robustness in presence of data noise.\\
Then,  in paragraph \ref{par:measured},  we report the results of the analysis of   NMRD profiles from two different samples: Dry Nanosponge (DN) and    Parmigiano-Reggiano (PR) cheese.

\subsection{Numerical Experimental setting}\label{par:sett}
All numerical computations are carried out using  Matlab R2021b on a laptop equipped with 
$2.9$ GHz Intel Core i7 quad-core processor and $16$ GB $2133$ MHz RAM. \\
For all tests,  the values  $\bar C$ and $\bar \tau$ in the constraints set $\mathcal{B}_{\psi}$ \eqref{eq:bounds} 
 are set equal to a value  large enough  so that the intermediate solutions 
 $\psi_1^{(k)}$ and $\psi_4^{(k)}$ never reach such bounds. 
 In our tests $\bar C=\bar \tau=100$  are suitable values.  
 %
The interval $[\omega_{\ell},  \omega_{u} ]$  in \eqref{eq:bounds},  representing the region where $R_1$  interrupts its decaying behaviour due to QRE,  is defined by inspection of the NMRD profile.   \\
 The starting guess for the parameter $\psi_1^{(0)} \equiv C^{HN}$ is obtained by the literature \cite{kruk2019dynamics}:
\begin{equation}
C^{HN} = \frac{2}{3} \left (\frac{\mu_0}{4 \pi} \frac{\gamma_H \gamma_N \hbar}{r_{NH}^3}\right )^2 \approx 0.18 \ \left [\frac{\mu s}{s^{2}} \right ] 
\label{eq:CHN}
\end{equation}
where the value of the physical constants is reported in table \ref{tab:const}.
\begin{table}[h!]
\centering
\begin{tabular}{ccc}
Constant & Description & Value \\
\hline
$\mu_0$ &  permeability of vacuum & $ 10^{-7} \ T^2 J^{-1} m^3 $\\
$\gamma_H$ & $^{1}H$ gyromagnetic factor& $2.577 \ 10^6 \ T^{-1}  s^{-1} $ \\
$ \gamma_N$ &  $^{14}N$ gyromagnetic factor &$3.078 \  10^6 \ T^{-1}  s^{-1} $\\
 $\hbar$ & reduced Planck's constant & $1.05472 \ 10^{-34} \ J \ s $ \\
 $r_{HN}$ & $^{1}H-^{14}N$ inter-spin distance &  $1.4 \ 10^{-10}  \ m$\\
 \hline
\end{tabular}
\caption{Characteristic constants for $C^{HN}$ in \eqref{eq:CHN}.}
\label{tab:const}
\end{table}
Concerning the quadrupolar parameters,  $\psi^{(0)}_2\equiv \sin^2{\Theta^{(0)}}$,  $\psi^{(0)}_3\equiv \sin^2{\Phi^{(0)}}$, 
 the initial values are equal to the mean of the corresponding upper and lower bounds  in $\mathcal{B}_{\psi}$,  i.e.  $1/2$.
The initial value of $\psi_4^{(0)}\equiv \tau_Q$, 
is set to $1$,  while $\psi^{(0)}_5\equiv \omega_-^{(0)}$, and $\psi^{(0)}_6\equiv \omega_+^{(0)}$   
are defined  as follows:
$$
  \psi^{(0)}_5 =  \omega_{\ell}+\frac{1}{4} |\omega_{u}- \omega_{\ell}|, \ \ \
\psi^{(0)}_6 = \omega_{u}-\frac{1}{4} |\omega_{u}- \omega_{\ell}|. 
$$
%
The computed results are evaluated by  the Mean Squared Error (MSE) 
$$  \text{MSE}=\frac{\| R_1 - \mathcal{F}(\f, \boldsymbol{\psi},R_0) \|^2}{m}, \ \ 
$$
and the Parameter Relative Error (PRE):
\begin{equation}
\text{PRE}(x)= \frac{\| x^{exact}-x^{computed}\|^2}{\| x^{exact}\|^2}
\end{equation}
with $x$ representing either the vector $\f$ or the scalars  $R_0$,  $\psi_i$, $i=1, \ldots, 6$. \\
The components of the vector $\boldsymbol{\psi}$ are referenced by the name in the physical model \eqref{eq:RNH},
 according to the  
mapping introduced in section \ref{DM},  and reported in table \eqref{tab:map}  for convenience.
\begin{table}[h!]
\centering
\begin{tabular}{cccccc}
 $C^{HN}$  & $ \Phi$ & $\Theta $ & $\tau_Q$ & $\omega_-$  & $\omega_+$   \\
 \hline
 $\psi_1$ & $asin(\sqrt{\psi_2})$& $asin(\sqrt{\psi_3)})$ & $\psi_4$ & $\psi_5$ & $\psi_6$\\
\end{tabular}
\caption{Quadrupolar parameters mapping.}
\label{tab:map}
\end{table}
All the tests apply algorithm \ref{alg:PI} with $Tol_{\lambda}=10^{-2}$ in \eqref{eq:lam-rule} and algorithm \ref{alg:GS} with 
$Tol_{GS}=10^{-6}$ in \eqref{eq:GS-conv}.\\
The computational cost is evaluated in terms of number of execution time.

\subsection{Synthetic test Problem}\label{par:robu}
To investigate the properties of \Name, we first test it on the synthetic NMRD profile $R_1$ represented in figure \ref{fig:SD_R1}(a),  and  obtained by setting the parameters of model \eqref{eq:R1}
as in the second column of table \ref{tab:T0}, with  the distribution function $\f^*$  represented in red in figure \ref{fig:res}(a).  Throughout the  paragraph we use the frequencies $\nu$ instead of the angular frequencies $\omega$, i.e.  $\nu_-\equiv \omega_-/(2 \pi)$ and $\nu_+\equiv \omega_+/(2 \pi)$.
\begin{table}[h!]
\centering
\begin{tabular}{cccc}
\hline
  & reference & computed & PRE \\
 \hline
$R_0$& $3.69$ &  $3.6868$ &$7.0267 \ 10^{-4} $ \\
 $C^{HN}$ & $18.84$ & $18.8453$ & $6.1449 \ 10^{-5}$ \\
  $\tau_Q$ & $ 0.96$ &  $0.9554$ & $8.5033 \ 10^{-6}$\\
   $\Theta$ & $1.09$ &  $1.0901$ & $6.1449  \ 10^{-5} $\\
   $ \Phi$ & $0.57$  & $0.5696$ & $ 6.9199 \ 10^{-4} $\\
    $\nu_- $ & $2.15$ & $2.1502$ & $5.7363 \ 10^{-6} $\\
    $\nu_+$ & $2.87$ & $2.8696$ & $1.1316 \ 10^{-6}$\\
    \hline
 \end{tabular}
 \caption{Model parameters: reference (second column), \Name  \ computed values (third column) and PRE (fourth column).  }
\label{tab:T0}
 \end{table}   
\begin{figure}[h!]
\centering
\includegraphics[width=6cm,height=5cm]{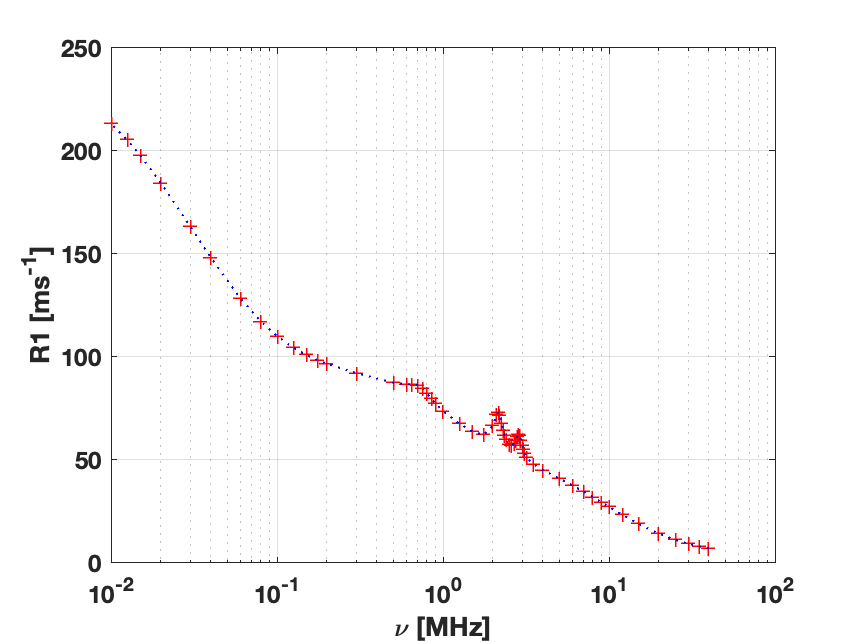}
\includegraphics[width=6cm,height=5cm]{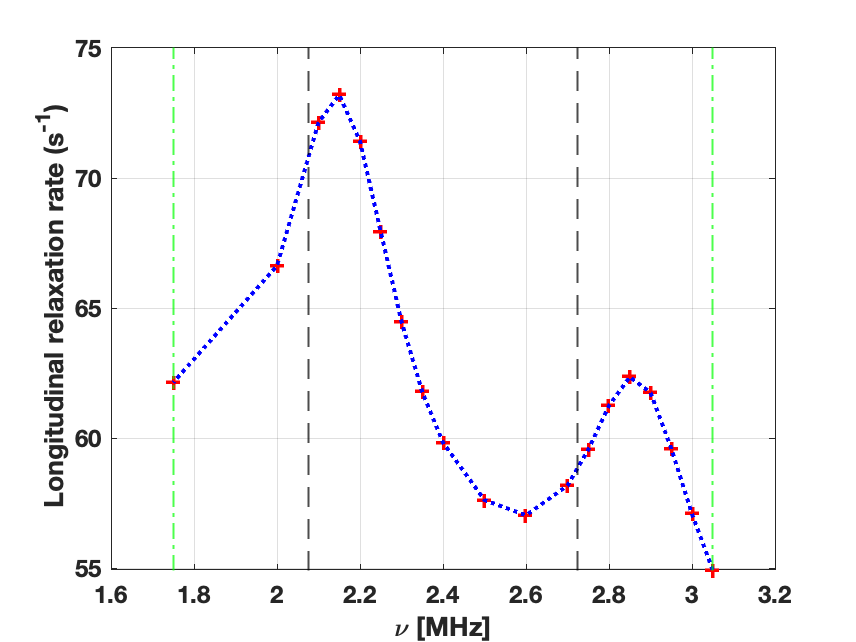}
\caption{Synthetic sample.  (a) Full NMRD profile (b) Zoom of NMRD profile in the reference interval $[\nu_{\ell},  \nu_{u}]$
represented by the left and right green vertical lines.  Left and right black vertical lines represent the values $ \psi^{(0)}_5/(2 \pi)$, $\psi^{(0)}_6/(2 \pi) $ respectively.}
\label{fig:SD_R1}
\end{figure}

The accuracy of the computed results can be appreciated  in the correlation distribution $\f$ and $R_1$ curves shown in figure \ref{fig:res}.
\begin{figure}[h!]
\centering
\includegraphics[width=6cm,height=5cm]{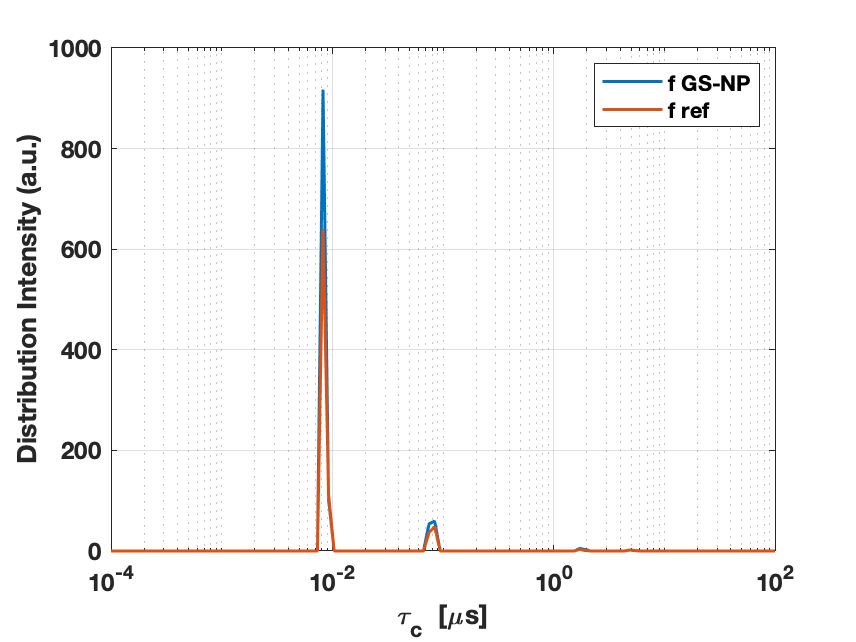}
\includegraphics[width=6cm,height=5cm]{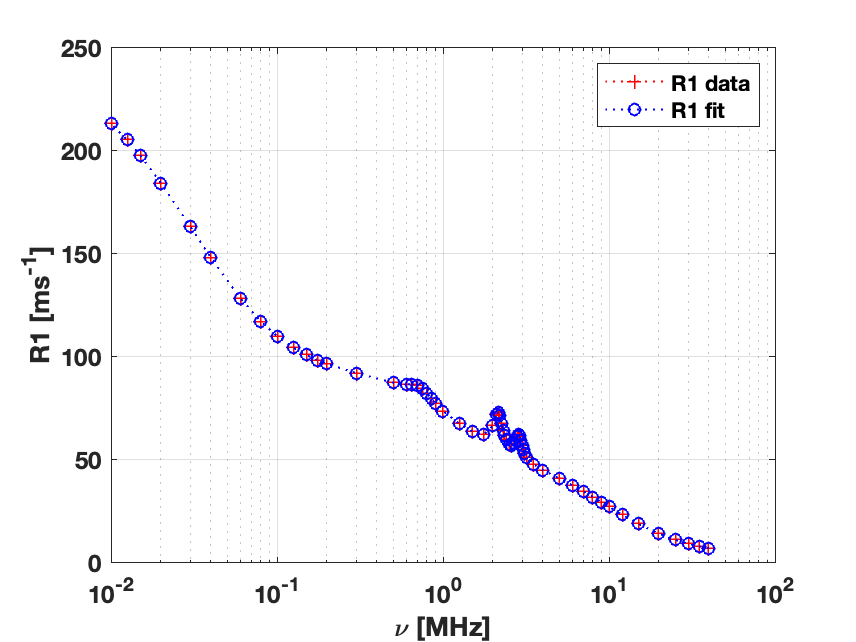}
\caption{Synthetic sample.  (a) Reference (red)  and computed correlation distribution (blue).  (b) Reference and computed $R_1$ profiles.}
\label{fig:res}
\end{figure}
To test the convergence behaviour  we evaluate the PRE and MSE  at each step of the GS method in algorithm \ref{alg:GS}.  Figure \ref{fig:Par_Ers}(a) shows the 
the behaviour of the relative errors for each parameter  $(\f,  R_0, C^{HN}, \Phi, \Theta, \tau_Q, \nu_-, \nu_+)$ compared to their reference values.
The convergence to reference parameters values is initially non monotonic for most parameters with the exception of $\tau_Q$  and $\nu_-$.  On the contrary,    MSE  has monotonic decrease as
 reported in figure \ref{fig:Par_Ers}(b).
\begin{figure}[h!]
\centering
\includegraphics[width=6cm,height=5cm]{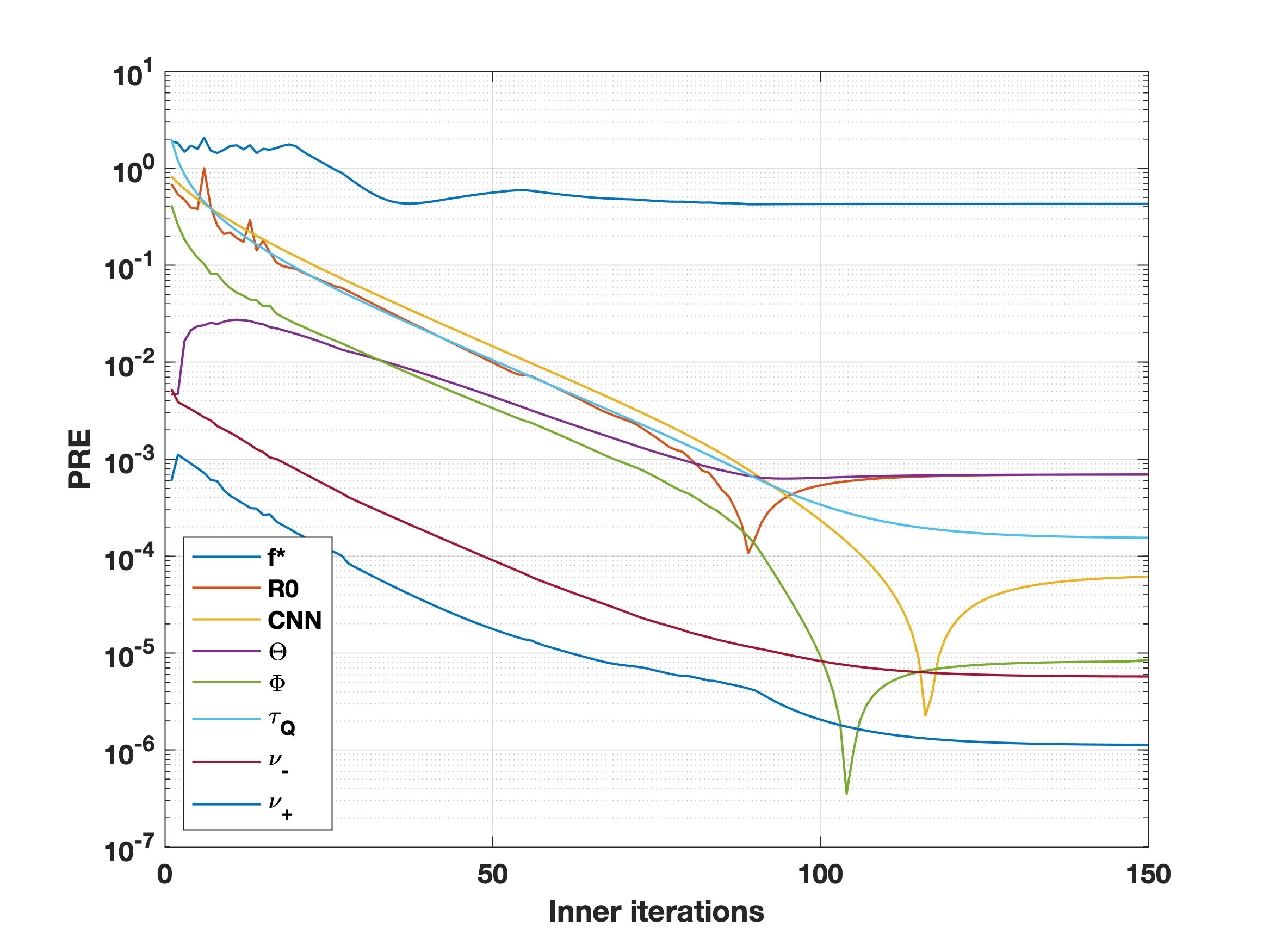}
\includegraphics[width=6cm,height=5cm]{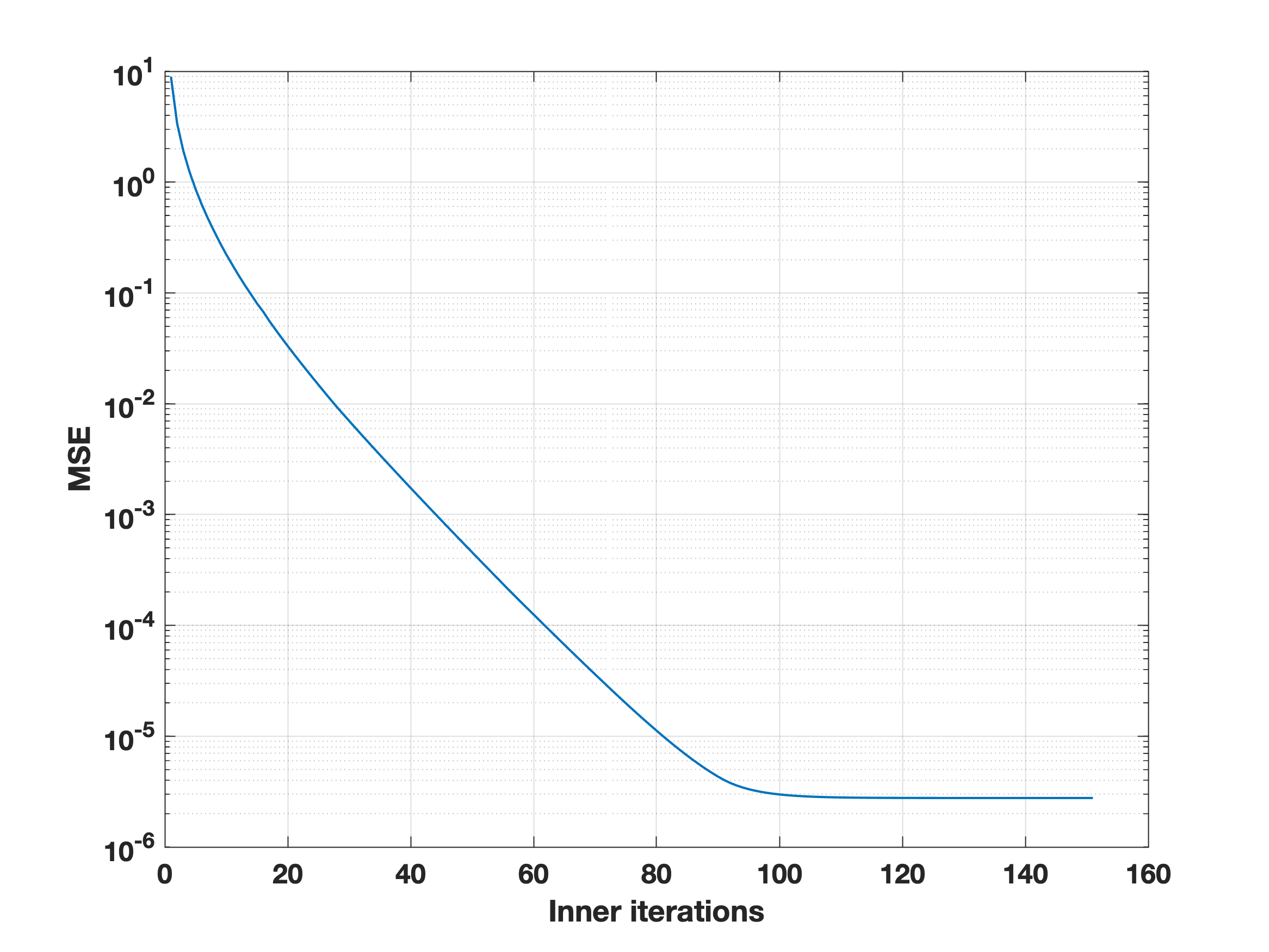}
\caption{Synthetic $R_1$.  (a) PRE values per iteration  (b)  MSE  values per iteration.}
\label{fig:Par_Ers}
\end{figure}
The  values of the computed parameters and relative errors  reported in the third and fourth columns of table \ref{tab:T0} confirm the excellent accuracy obtained by the proposed algorithm.  
The computed  value of the regularization parameter
is $\lambda^* = 1.216 \ 10^{-9}$   with   computation time of $115.15 \ s$.  \\
Although the convergence of the update  formula  \eqref{BP} depends on the initial guess $\lambda^{(0)}$, we experimentally found convergence for $\lambda^{(0)}$ in a    quite large interval ($[10^{-16},  10^0]$).  In figure \ref{fig:Lambda}  we represent the sequences $\lambda^{(k)}$,  $k=0, \ldots, 15$ obtained by algorithm \ref{alg:PI} with $\lambda^{(0)} \in \left \{ 10^{-16},   10^{-6},  10^{-4},  10^{-2}, 10^{0} \right \}$.  Optimal convergence ($k=1$) is obtained for $10^{-16} \leq  \lambda^{(0)} \leq 10^{-4}$ while  $\lambda^{(0)} > 10^{-4}$ causes a slight increase of the iterations number,  still preserving the convergence up to $\lambda^{(0)}=1$,  which is usually considered as a standard starting guess.  Therefore,  to keep computations efficient,  
$\lambda^{(0)} =10^{-6}$ is used throughout  the numerical experiments  of this section.
\begin{figure}[h!]
\centering
\includegraphics[width=6cm,height=5cm]{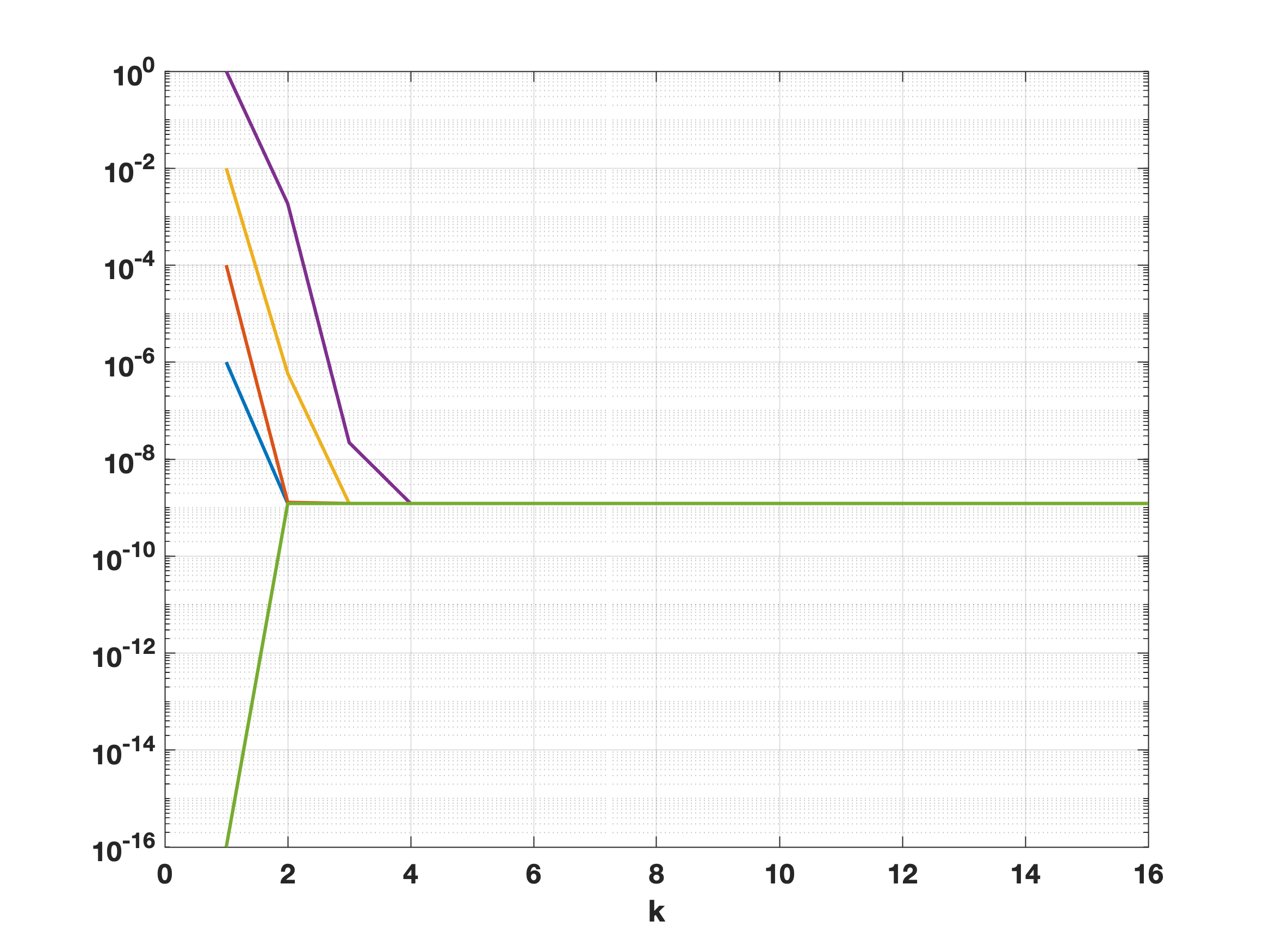}
\caption{Synthetic $R_1$.   Sequence $\{\lambda^{(k)}\}$,  obtained by \Name \  with $\lambda^{(0)}\in \left \{  10^{-16},   10^{-6},  10^{-4},  10^{-2}, 10^{0} \right \}$. }
\label{fig:Lambda}
\end{figure}

\paragraph{Comparison with Matlab solvers}
With this test problem, we aim to compare \Name \  with  several methods implemented by the Matlab function {\tt fmincon}:  such as interior-point  ({\tt ip}),  the active-set ({\tt as}),  the sequential quadratic programming ({\tt sqp}) and trust-region-reflective ({\tt trr}) methods.
We highlight that \Name \ automatically computes the value of the 
regularization parameter $\lambda$ while the Matlab function {\tt fmincon} solves the optimization problem \eqref{eq:IP}  for a fixed value of $\lambda$.
 Therefore,  we compare the GS  algorithm \ref{alg:GS}  with {\tt ip},  {\tt as},  {\tt sqp} {\tt trr} for the same fixed value $\lambda=1. \ 10^{-8}$, which we heuristically found to be a good value for all the methods.

Besides the automatic computation of the regularization parameter $\lambda$,  \Name \ splits the unknown parameters in two blocks and alternatively minimizes the objective function for $(R_0, \f)$, the offset and correlation distribution, and for the quadrupolar parameters $\boldsymbol{\psi}$.
Two different methods are used for the solution of the corresponding sub-problems. 
On the contrary,  {\tt fmincon} computes all the parameters applying the same method.  

%

Table \ref{tab:err_methods} shows the PRE  and MSE values (last row) obtained by  \Name \ (second column) and 
by the  Matlab solvers,  highlighting the smallest values. 
\begin{table}[h!]
\centering
\begin{tabular}{c|ccccc}
    \hline
    \hline
 & \multicolumn{5}{c}{PRE}\\
 Parameter  &  \Name &     {\tt ip} &    {\tt active-set}    &    {\tt sqp }     &     {\tt trr} \\
 \hline
  $\f$      &  $\mathbf{4.2834\ 10^{-1} }$ &  1.5509            &   1.4497            &  1.3020                & $8.5279\ 10^{-1}$      \\ 
  $R_0$     &  $\mathbf{7.0032\ 10^{-4}}$       &  $9.9629\ 10^{-1}$ &   1.0000            &  $2.7930\ 10^{-1}$     & $1.3671\ 10^{-1}$      \\ 
  $C^{HN}$  &  $5.8238\ 10^{-5}$           &  4.2908            &   $9.6353\ 10^{-1}$ &  $\mathbf{1.5045\ 10^{-5}}$ & $1.1591\ 10^{-2}$     \\  
  $\Theta$  &  $\mathbf{6.9108\ 10^{-4}}$       &  $6.5929\ 10^{-2}$ &   $7.7862\ 10^{-2}$ &  $7.2072\ 10^{-4}$     & $1.5758\ 10^{-2}$   \\    
  $\Phi$    &  $\mathbf{8.7093\ 10^{-6}}$       &  $5.5535\ 10^{-1}$ &   2.1372            &  $2.6548\ 10^{-5}$     & $7.2619\ 10^{-3}$ \\   
  $\tau_Q $ &  $\mathbf{1.5660\ 10^{-4}}$     &  $9.9228\ 10^{-1}$ &   $3.1584\ 10^{1}$  &  $1.8903\ 10^{-4}$     & $1.1033\ 10^{-2}$ \\      
  $\nu_-$   &  $5.7679\ 10^{-6}$                  &  $4.0856\ 10^{-1}$ &   $2.2756\ 10^{-1}$ &  $\mathbf{5.6228\ 10^{-6}}$ & $5.9438\ 10^{-5}$ \\   
  $\nu_+$   &  $\mathbf{1.1391\ 10^{-6}}$       &  $5.5197\ 10^{-2}$ &   $3.3362\ 10^{-2}$ &  $1.2084\ 10^{-6}$     & $1.8516\ 10^{-5}$ \\
\hline
\hline
  $MSE $       & $\mathbf{2.8131\ 10^{-6}}$     &  9.1906   &    9.0766  &   $3.1658\ 10^{-6}$     &     $2.8289\ 10^{-3}$    \\ 
\hline
\end{tabular}
\caption{Parameter relative errors and MSE of \Name \ and methods implemented by the Matlab function {\tt fmincon}. }
\label{tab:err_methods}
\end{table}
The  distribution ${\f}$ computed by  {\tt sqp} is shown in figure \ref{fig:sqp}.
\begin{figure}[h!]
\centering
\includegraphics[width=6cm,height=5cm]{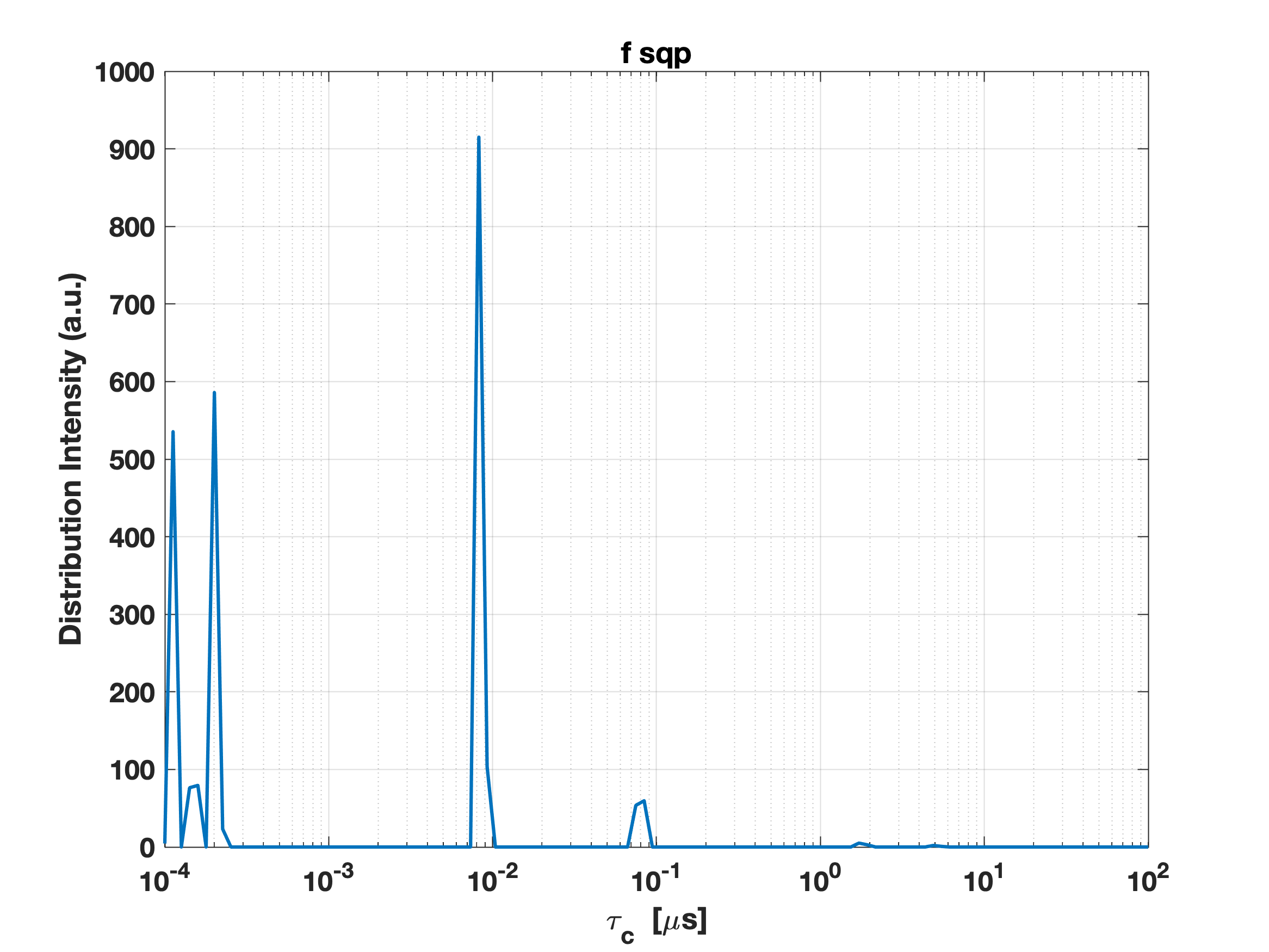}
\caption{Correlation time distribution $\f$ computed by {\tt sqp} method. }
\label{fig:sqp}
\end{figure}
We observe that  \Name \ has globally superior accuracy both in data fitting and parameter estimation.
Only {\tt sqp}  has   MSE value similar to 
\Name \ ($3.1658e-06$ compared to $2.8131e-06$),  and a slightly better PRE for parameters $C^{HN}$ and $\nu_-$, 
but  the amplitude distribution in figure \ref{fig:sqp} shows too many spurious peaks.
\paragraph{Test with noisy data}
In this paragraph we test the algorithm robustness to data perturbations
by computing noisy data $\y^{\delta}\in \mathbb{R}^m$  from a random uniformly distributed vector $\mathbf{v} \in \mathbb{R}^m$ with values in the interval $[-1, 1]$ s.t.
$$y^{\delta}_i = y_i(1 + \delta v_i), \ i=1, \ldots, m$$
and consider the cases $\delta =1\%, 5\%, 10\%$.
Computing $500$ noisy samples $\y^{\delta}_j$ we run \Name \ and compare the errors on the estimated parameters as well as reconstructed NMRD profiles. 

 For the noise values $\delta=1 \%, 5 \%,10\%$,  we compute 
 the  mean  PRE for each parameter and represent the mean  values  in the bar plot shown in figure \ref{fig:bars}
 together with the product $C^{HN} \cdot \tau_Q$.
 \begin{figure}[h!]
\centering
\includegraphics[width=8cm,height=6cm]{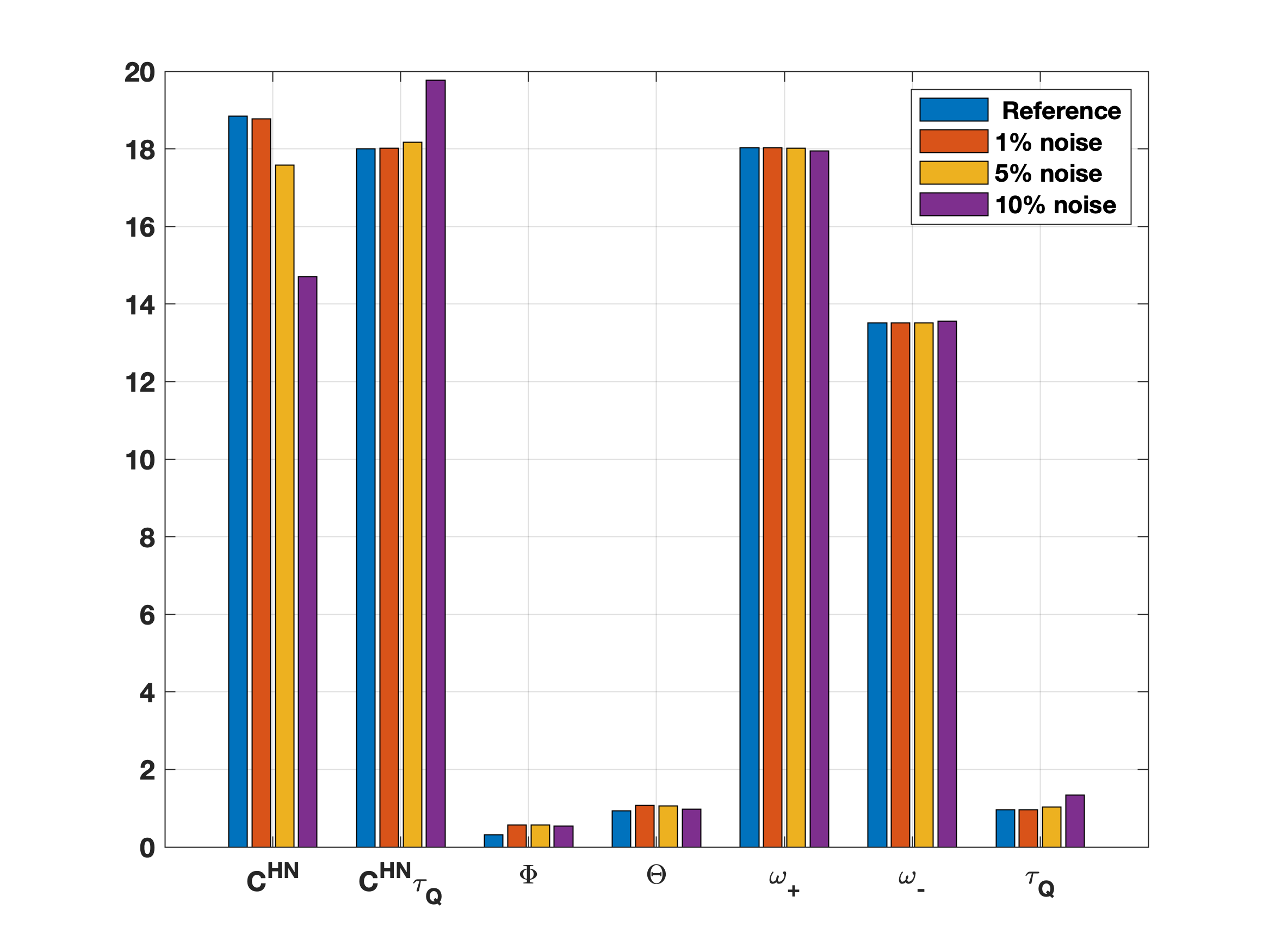}
\caption{Mean parameter values computed by 500 noisy NMRD profiles  with noise $\delta =1\%, 5\%10\%$.}
\label{fig:bars}
\end{figure}
The mean PRE and MSE are reported in table \ref{tab:T2}. 
 \begin{table}
 \centering
\begin{tabular}{c|ccc}
    \hline
    \hline
 & \multicolumn{3}{c}{PRE}\\
 \hline
  & $1 \%$ & $5 \%$ & $10 \%$ \\
 \hline
    $\f$       &   $5.9019\ 10^{-1}$  &  $1.1816        $  &  $1.4509         $ \\
    $R_0$      &   $3.6393\ 10^{-2}$  &  $1.6726\ 10^{-1}$  &  $1.8099\ 10^{-1}$ \\
    $C^{HN}$   &   $3.3625\ 10^{-2}$  &  $2.7021\ 10^{-1}$  &  $4.7742\ 10^{-1}$ \\
    $\Theta$   &   $2.3023\ 10^{-2}$  &  $1.0678\ 10^{-1}$  &  $2.1726\ 10^{-1}$ \\
    $\Phi$     &   $3.5151\ 10^{-2}$  &  $4.0280\ 10^{-1}$  &  $6.5910\ 10^{-1}$ \\
    $\tau_Q $  &   $4.4998\ 10^{-2}$  &  $1.8862         $  &  $1.1095\ 10^{1} $ \\
    $\nu_-$    &   $4.3917\ 10^{-3}$  &  $4.8712\ 10^{-2}$  &  $7.2441\ 10^{-2}$ \\
    $\nu_+$    &   $3.0889\ 10^{-3}$  &  $3.8712\ 10^{-2}$  &  $5.6856\ 10^{-2}$ \\
    \hline
    \hline
      MSE        &    $1.5980\ 10^{-1}$  & 3.1441 &  $1.0055\ 10^{1}$ \\
    \hline
\end{tabular}
 \caption{Mean PRE  and  MSE on 500 noisy NMRD profiles with  $\delta=1\%, 5\%, 10\%$. }
 \label{tab:T2}
 \end{table}
The computed $R_1$ curves and the zoom in the QRE interval are shown in figures \ref{fig:R1_500_01},\ref{fig:R1_500_05} and 
\ref{fig:R1_500_10} for $\delta =1 \%, 5 \%, 10 \%$ respectively.
\begin{figure}[h!]
\centering
\includegraphics[width=6cm,height=5cm]{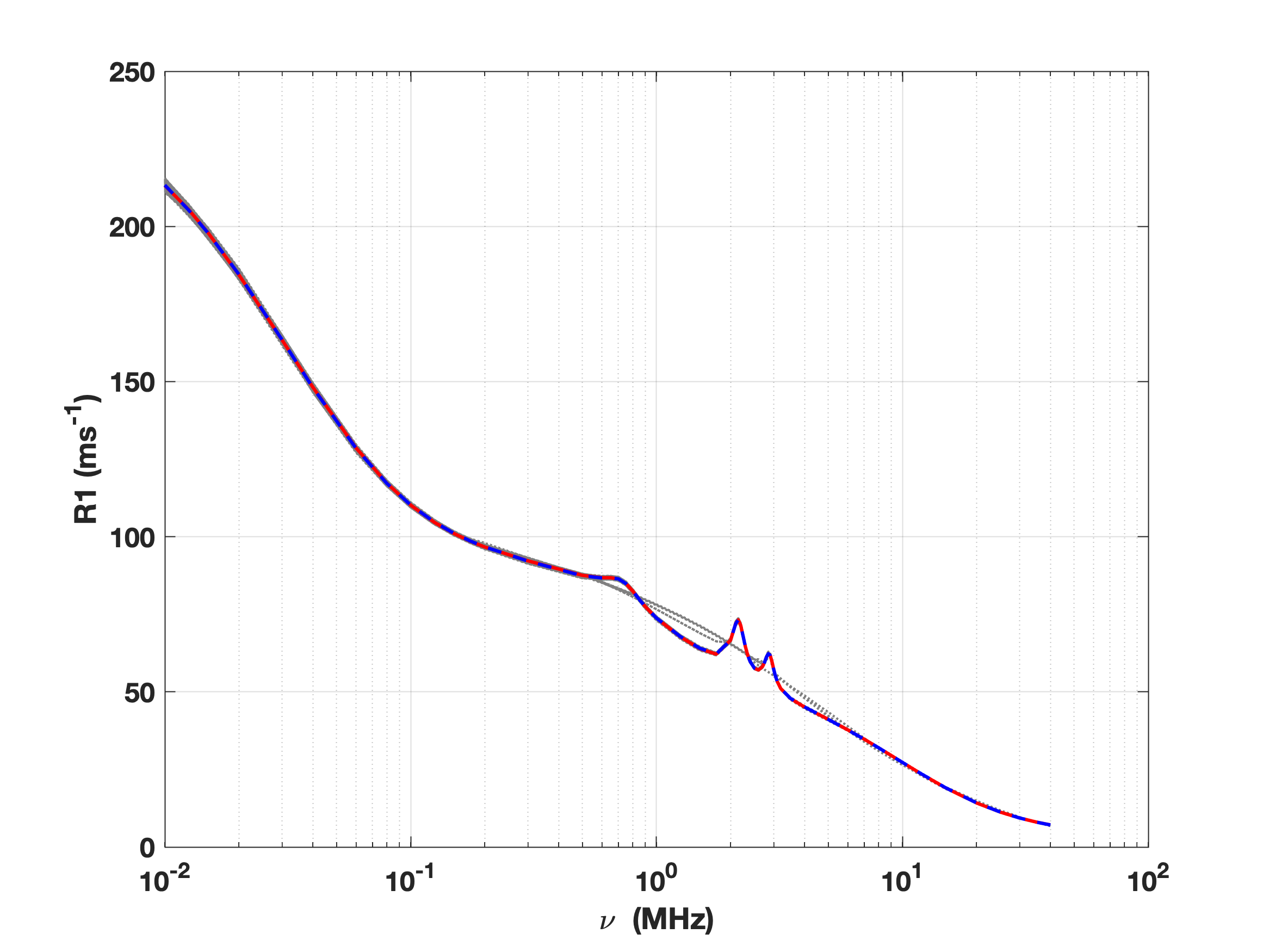}
\includegraphics[width=6cm,height=5cm]{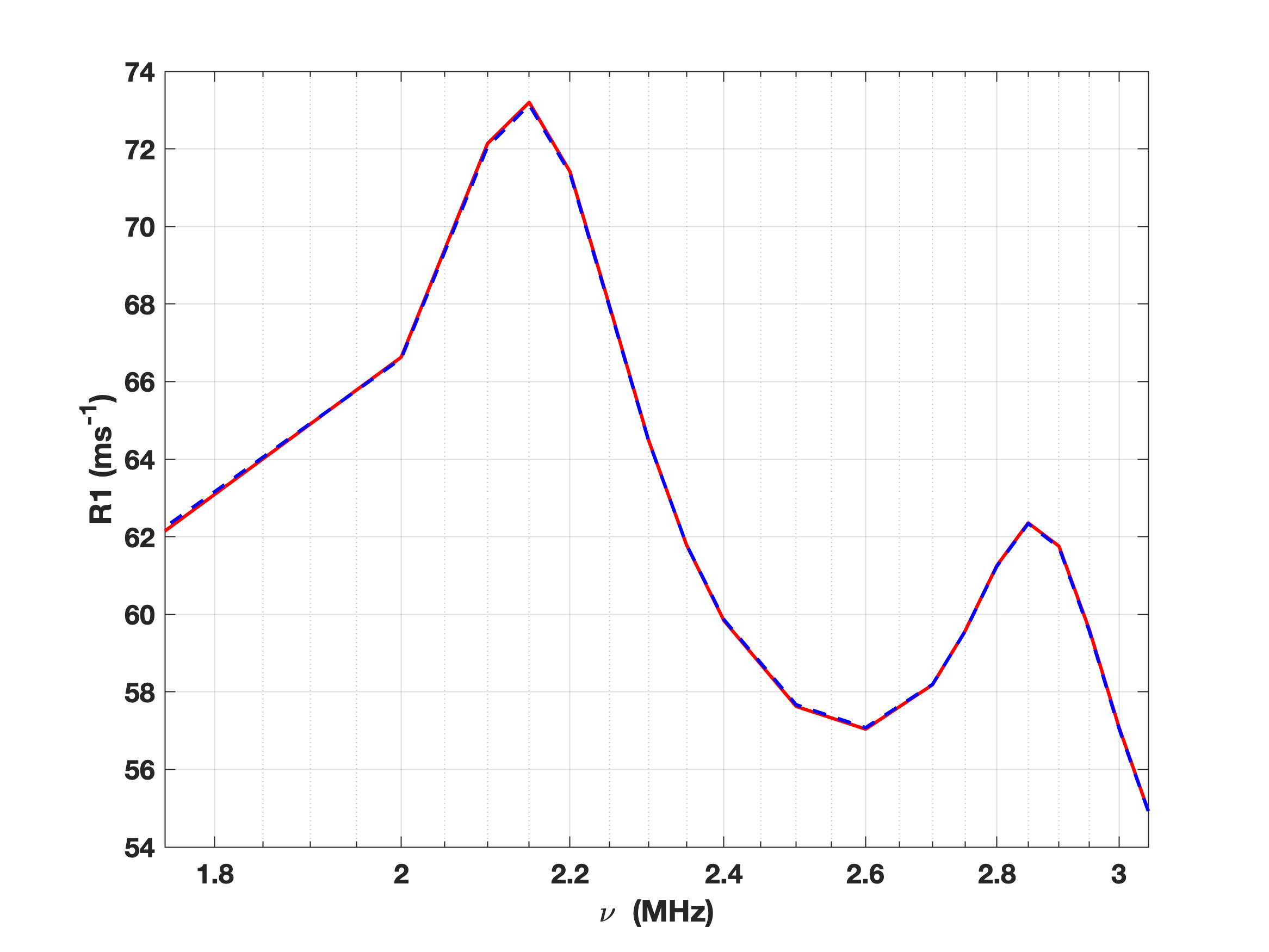}
\caption{Fit of NMRD obtained from 500 noisy Synthetic NMRD curves with noise $\delta=1\%$.  (a) Light gray: 500 fitted $R_1$ curves,  Red line: Reference NMRD curve.  Blue line: average over 500 fitted $R_1$ values.   (b) zoom in QRE interval.}
\label{fig:R1_500_01}
\end{figure}
\begin{figure}[h!]
\centering
\includegraphics[width=6cm,height=5cm]{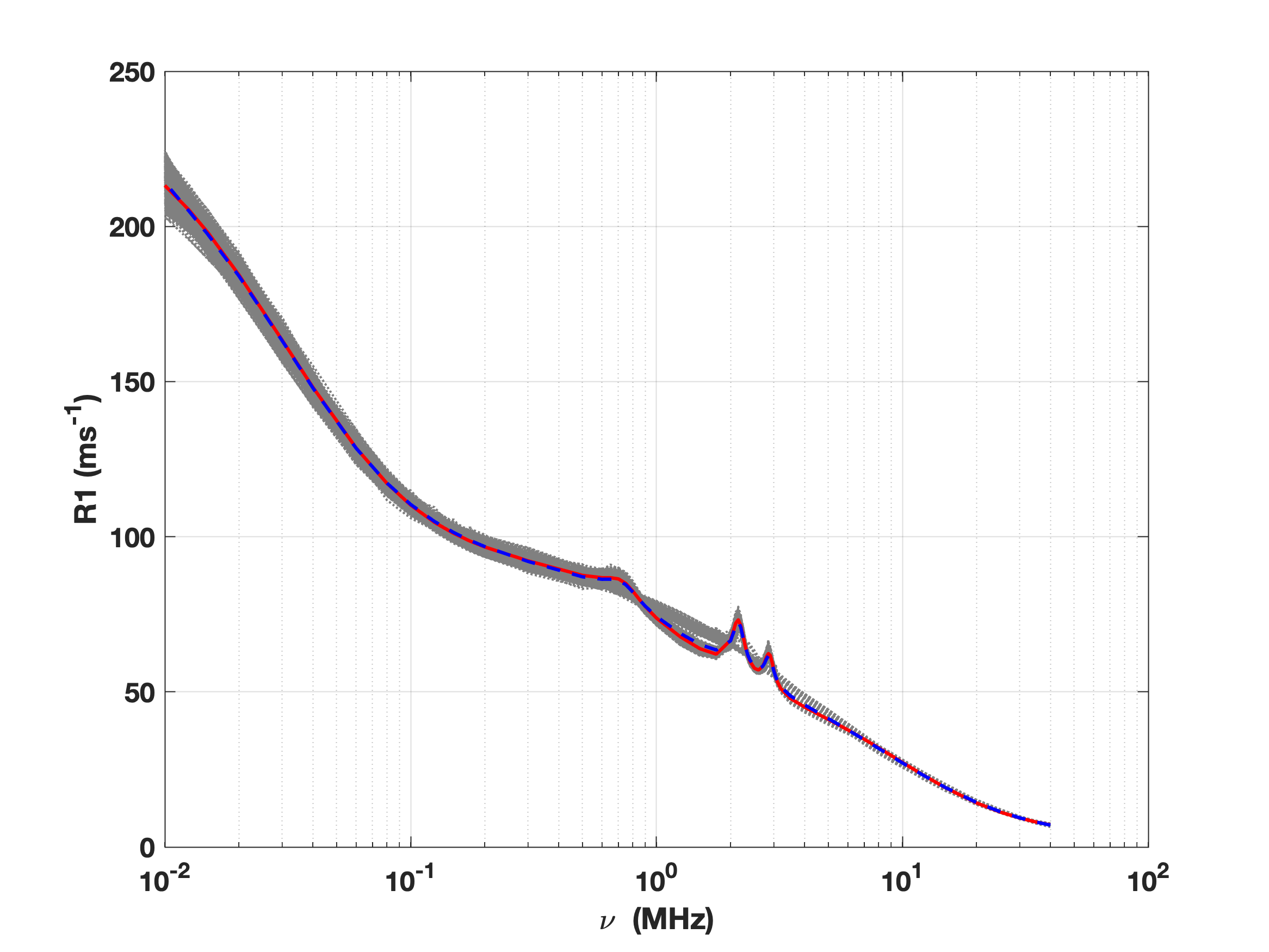}
\includegraphics[width=6cm,height=5cm]{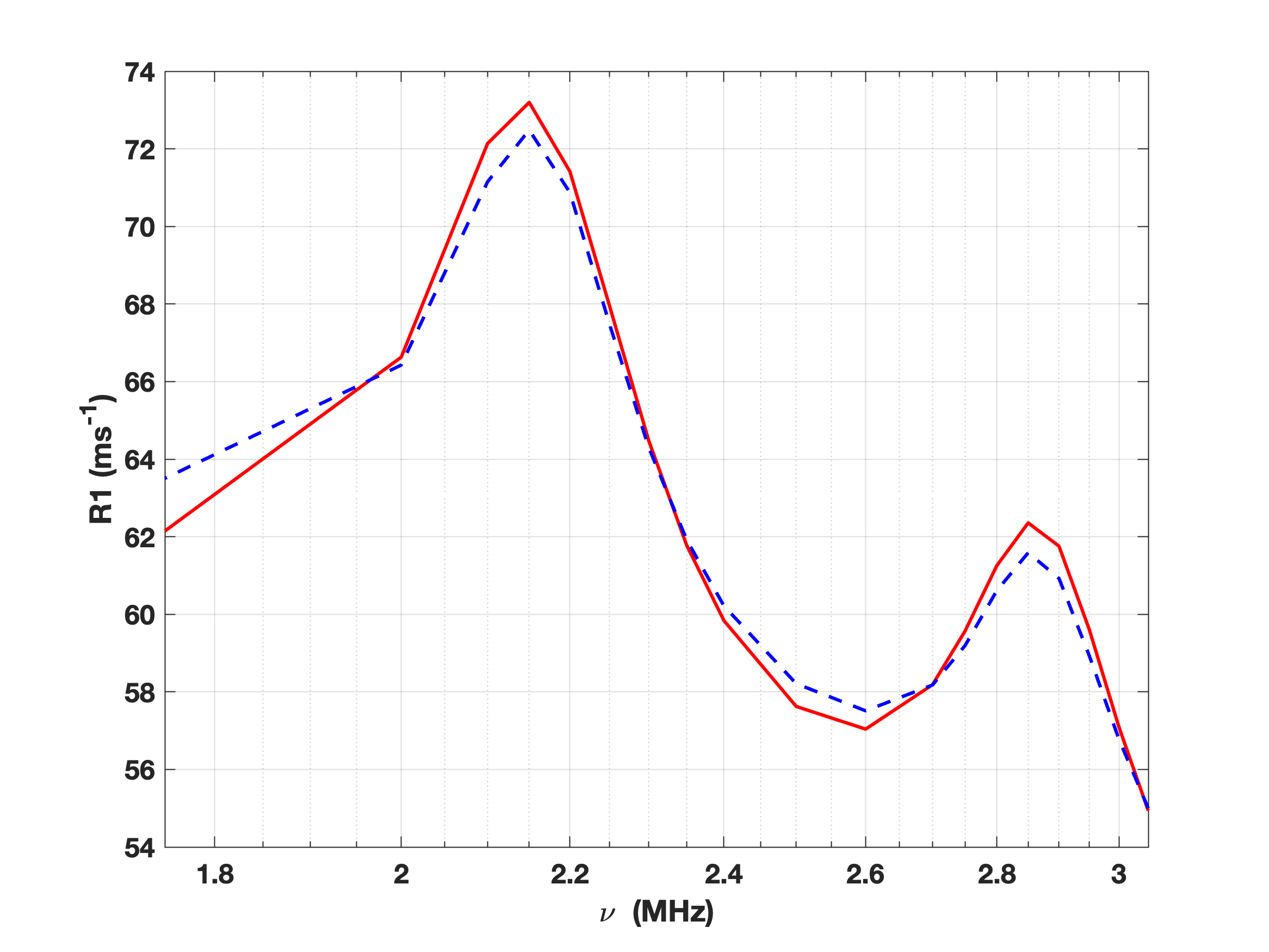}
\caption{Fit of NMRD obtained from 500 noisy Synthetic NMRD curves with noise $\delta=5\%$.  (a) Light gray: 500 fitted $R_1$ curves,  Red line: Reference NMRD curve.  Blue line: average over 500 fitted $R_1$ values.   (b) zoom in QRE interval.}
\label{fig:R1_500_05}
\end{figure}

\begin{figure}[h!]
\centering
\includegraphics[width=6cm,height=5cm]{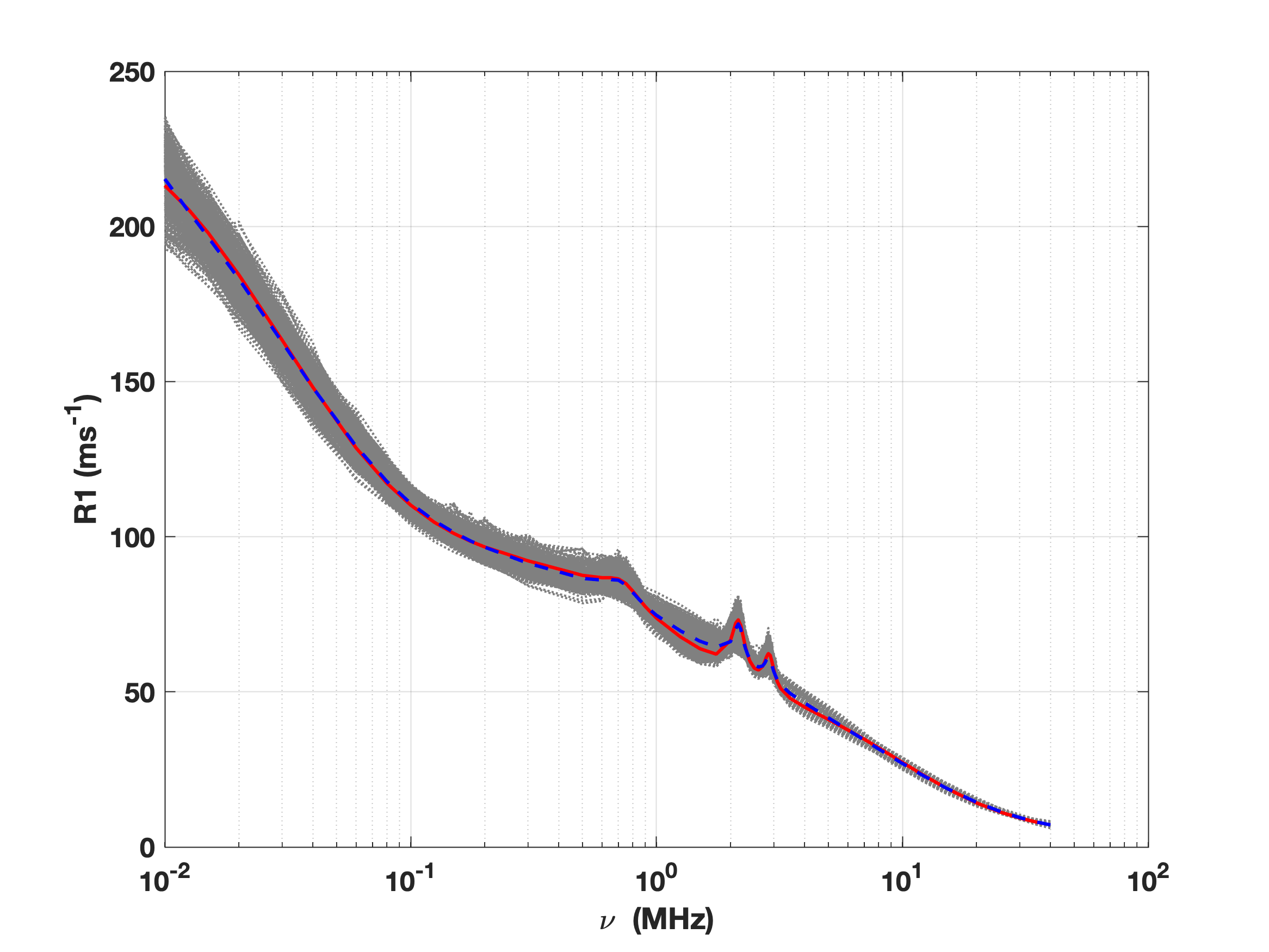}
\includegraphics[width=6cm,height=5cm]{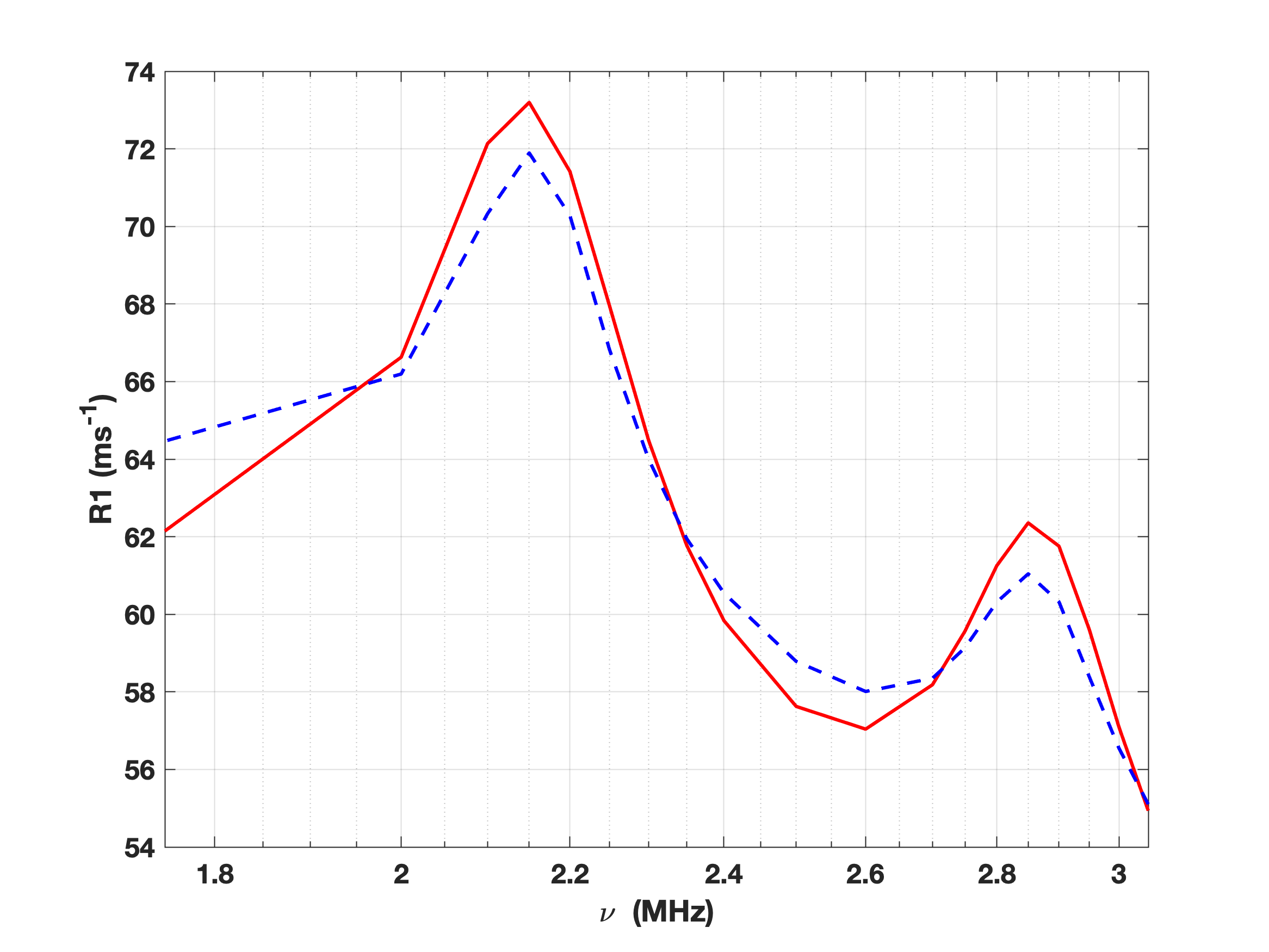}
\caption{Fit of $R_1$ obtained from 500 noisy synthetic NMRD profiles with noise $\delta=10\%$.  (a) Light gray: 500 fitted $R_1$ curves,  Red line: Reference NMRD curve.  Blue line: average over 500 fitted $R_1$ values.   (b) zoom in QRE interval.}
\label{fig:R1_500_10}
\end{figure}

 In figure \ref{fig:bars}, we observe that data noise affects mainly $C^{HN}$,  $\tau_Q$ and $\Phi$ values.  However, considering the value of the product $C^{HN} \tau_Q$,  represented by  the second group in figure \ref{fig:bars}, 
 we  see that the value is preserved when $\delta = 1\%,  5\%$.  This feature is a physical characteristic and allows us to consider   accurate the related parameters. 
 
 Although the average MSE  increase with data noise,  the computed average $R_1$ curves show a very good agreement to the reference NMRD profiles
(figures  \ref{fig:R1_500_01}, \ref{fig:R1_500_05} and 
\ref{fig:R1_500_10}).  The QRE is well reproduced even with high noise (figures \ref{fig:R1_500_01}(b), \ref{fig:R1_500_05}(b) and 
\ref{fig:R1_500_10}(b)).

\subsection{NMRD profiles from FFC measures}\label{par:measured}
In this paragraph we consider the NMRD profiles obtained from two different materials described in \cite{lo2021heuristic}.
\begin{itemize}
\item A sample of 24-month aged Parmigiano-Reggiano (PR) cheese.
The NMRD profile represented in figure \ref{fig:NMRD_samp}(a) has 
$m=48$ values  with  confidence intervals ranging from 
$\pm 0.35 \%$ to $\pm 3.07 \%$ of the  value.
The  quadrupolar  peaks,  represented in figure \ref{fig:R1_zoom}(a),  correspond to frequency  values $\nu_-=2.1$ and $\nu_+= 2.8$ of values $R1_-= 32.2 \ s^{-1}$ and $R1_+= 30.7 \ s^{-1}$ respectively.
\item Dry nanosponge (DN).  
In this case the NMRD profile represented in figure \ref{fig:NMRD_samp}(b) has 
$m=44$ values  with  confidence intervals ranging from 
$\pm 0.47 \%$ to $\pm 1.54 \%$ of the  value.
The quadrupolar peaks,  represented in figure  \ref{fig:NMRD_samp}(b),  correspond to frequency  values  $\nu_-=2.4991 \ MHz$ and $\nu_{+}=3.1488 \ MHz$ of values $R1_-= 104.85 \ s^{-1}$ and $R1_+= 104.85 \ s^{-1}$ respectively. 
\end{itemize}
\begin{figure}[h!]
\centering
\includegraphics[width=6cm,height=5cm]{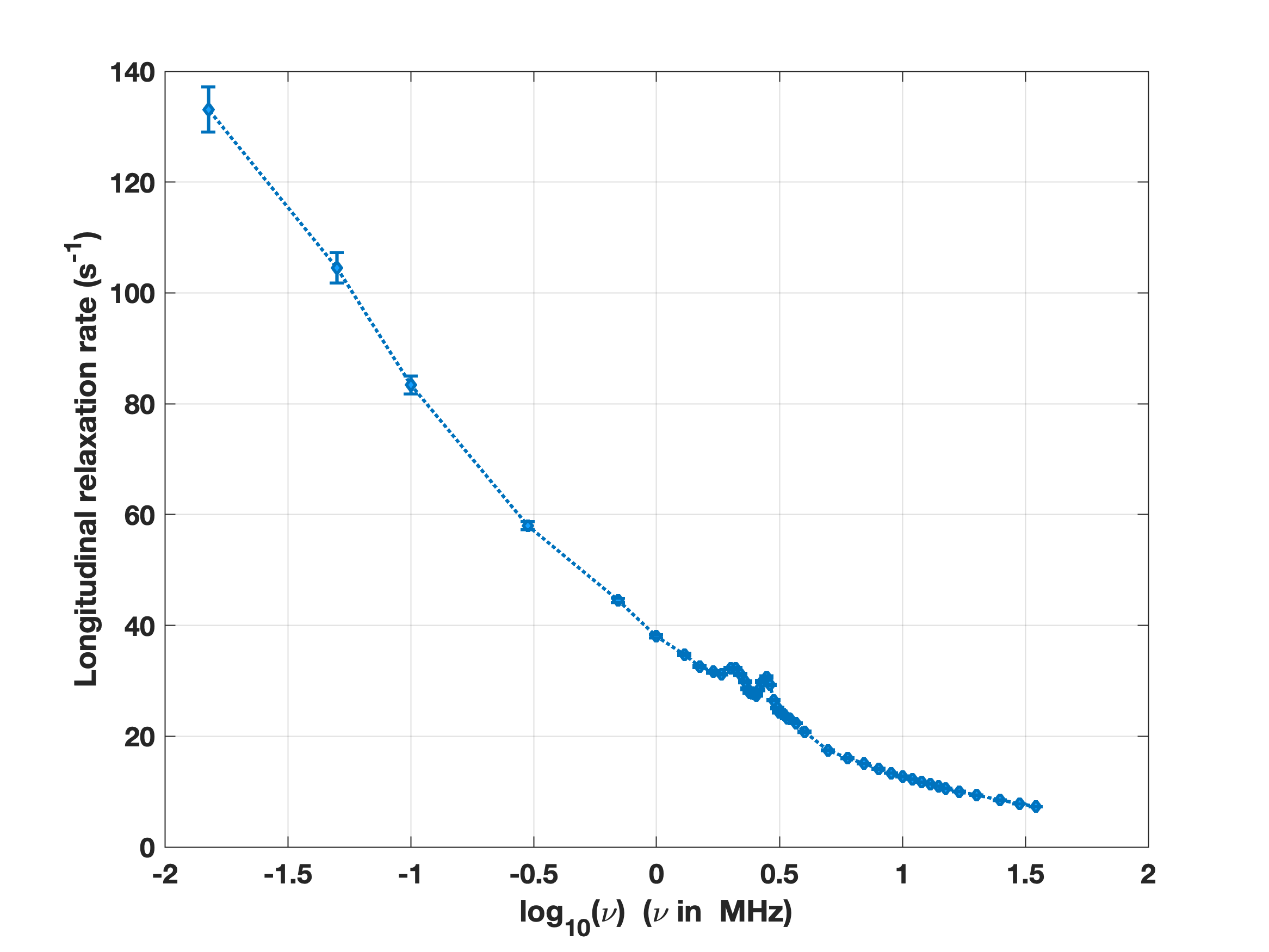}
\includegraphics[width=6cm,height=5cm]{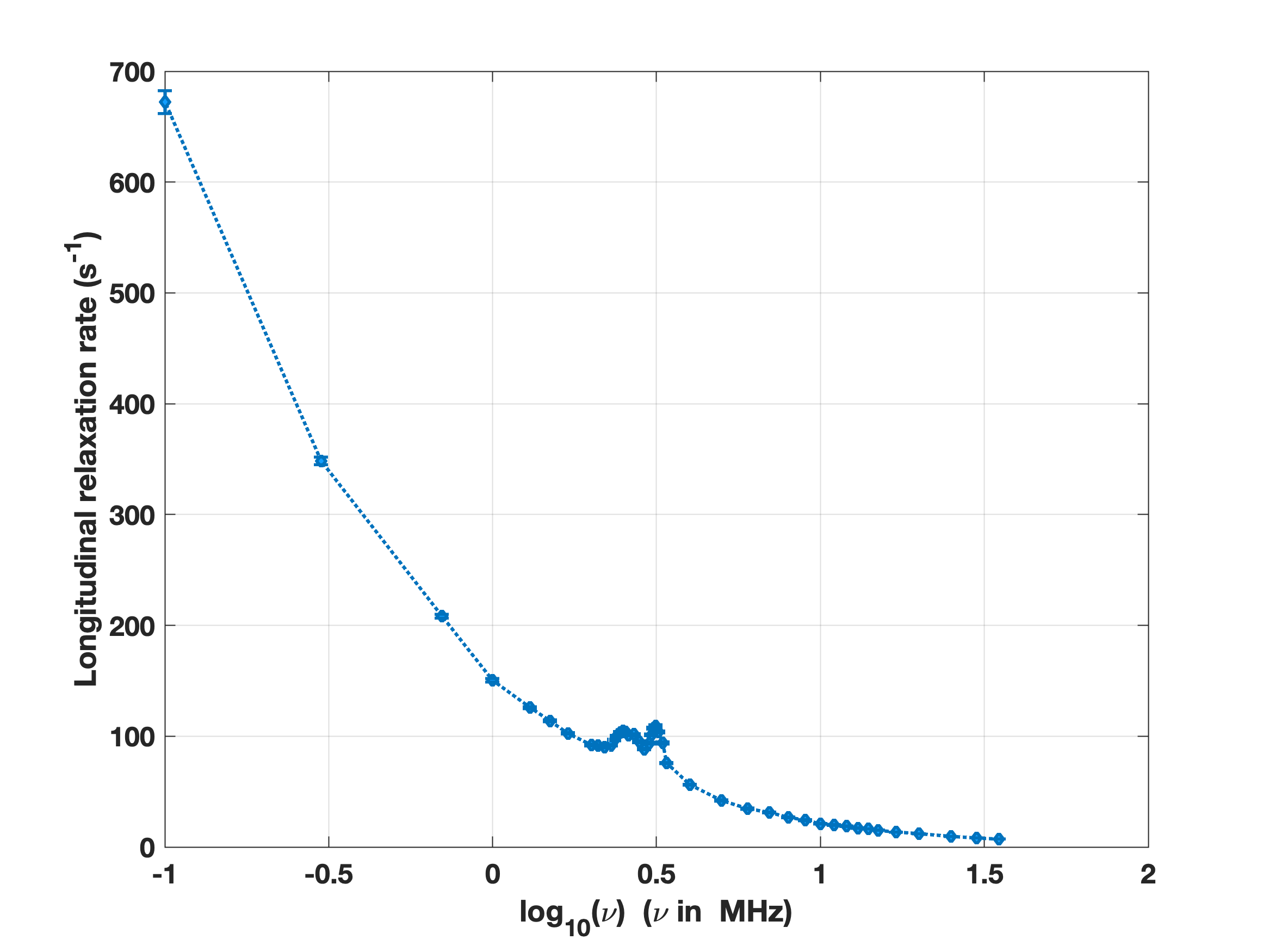}\\
(a) \hspace{5cm} (b)
\caption{NMRD profiles. (a) Parmigiano Reggiano sample.  (b) Dry nanosponge sample.}
\label{fig:NMRD_samp}
\end{figure}
\begin{figure}[h!]
\centering
\includegraphics[width=6cm,height=5cm]{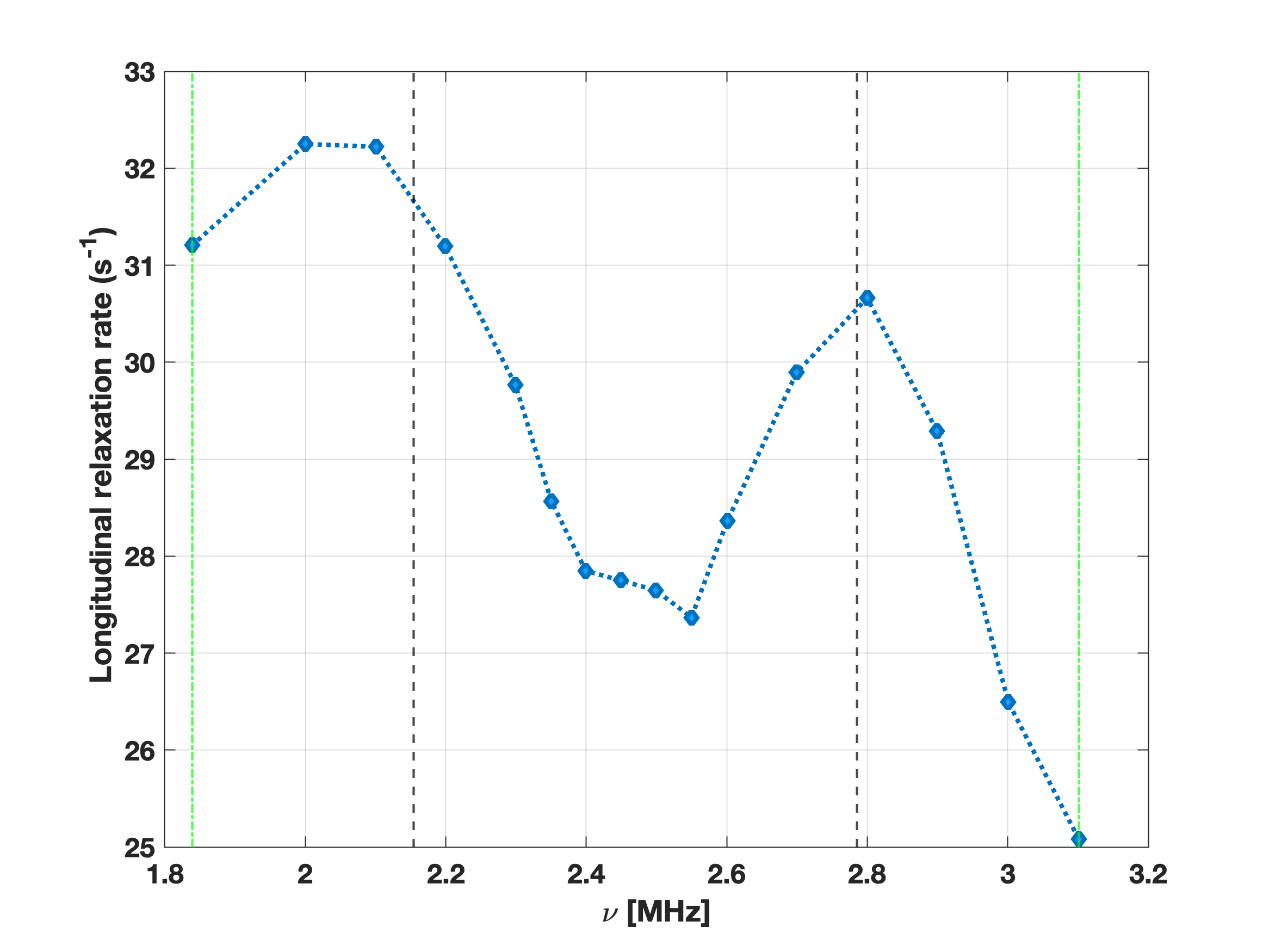}
\includegraphics[width=6cm,height=5cm]{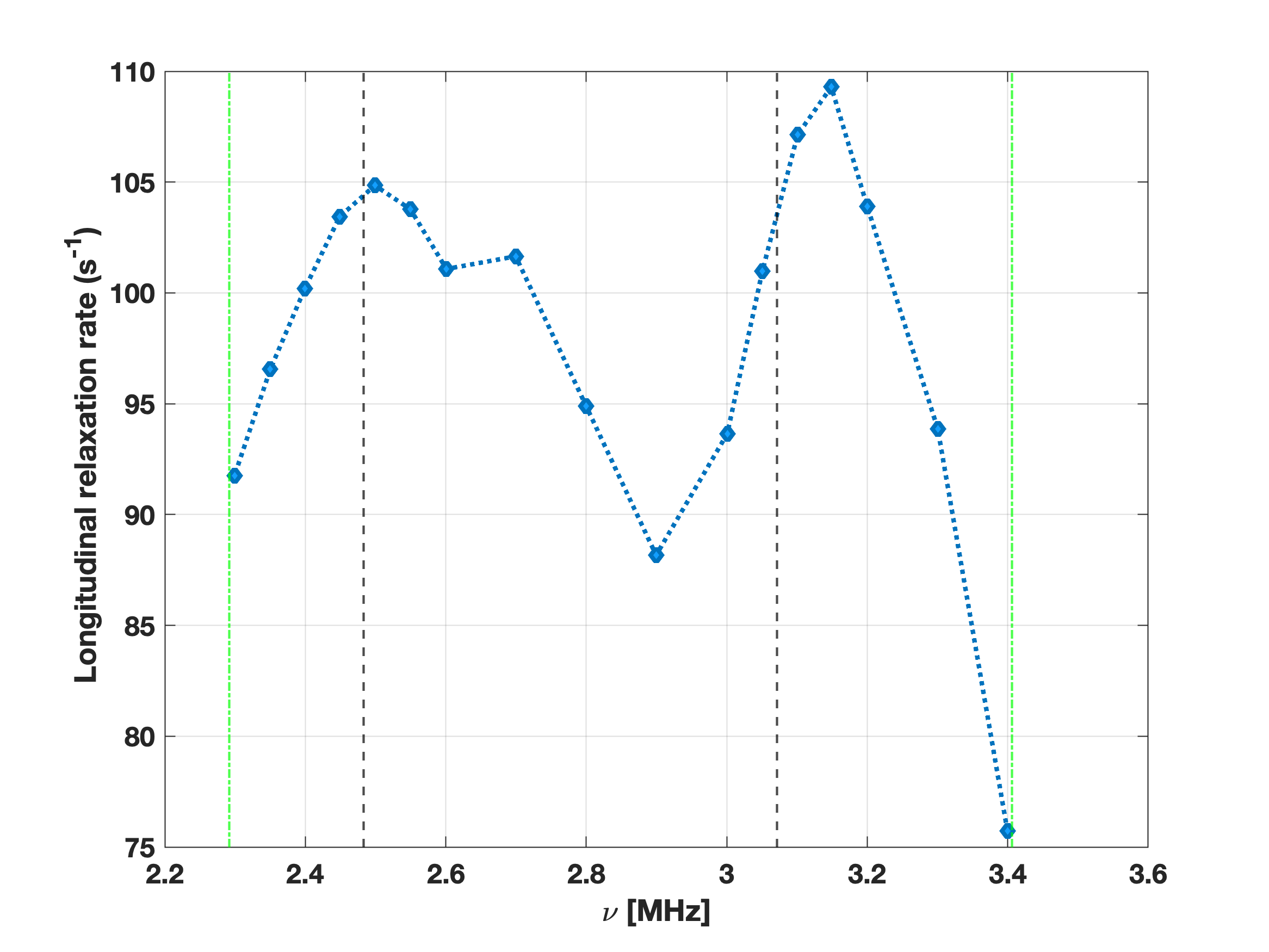}\\
(a) \hspace{5cm} (b)
\caption{Zoom of  quadrupolar dips. (a) Parmigiano Reggiano Cheese.  (b) Dry nanosponge sample.}
\label{fig:R1_zoom}
\end{figure}

The proposed  \Name  \ method has been used to compute the model parameters reported in table \ref{tab:Par_data}.
\begin{table}[h!]
\centering
\begin{tabular}{c| cc}
\hline
\hline
 & \multicolumn{2}{c}{Parameter values}\\
\hline
 & PR  & DN \\ 
\hline 
$R_0$ & 3.23 & 2.73 \\ 
$C^{NH}$ & 5.66 & 69.00 \\ 
$\Theta$ & 1.25 & 0.91 \\ 
$\Phi$ & 0.86 & 0.87 \\ 
$\tau_Q$ & 1.02 & 0.74 \\ 
$\nu_-$ & 2.1 & 2.56 \\ 
$\nu_+$ & 2.8 & 3.17 \\ 
\hline
\hline
MSE & $7.8887  \ 10^{-2}$ & $2.7853$\\
\hline
\end{tabular}
\caption{Values of the parameters fitted by \Name \ and MSE in the last row.}
\label{tab:Par_data}
\end{table}
The obtained correlation distributions  are represented  in figure \ref{fig:Ampl} in dark green line.
\begin{figure}[h!]
\centering
\includegraphics[width=6cm,height=5cm]{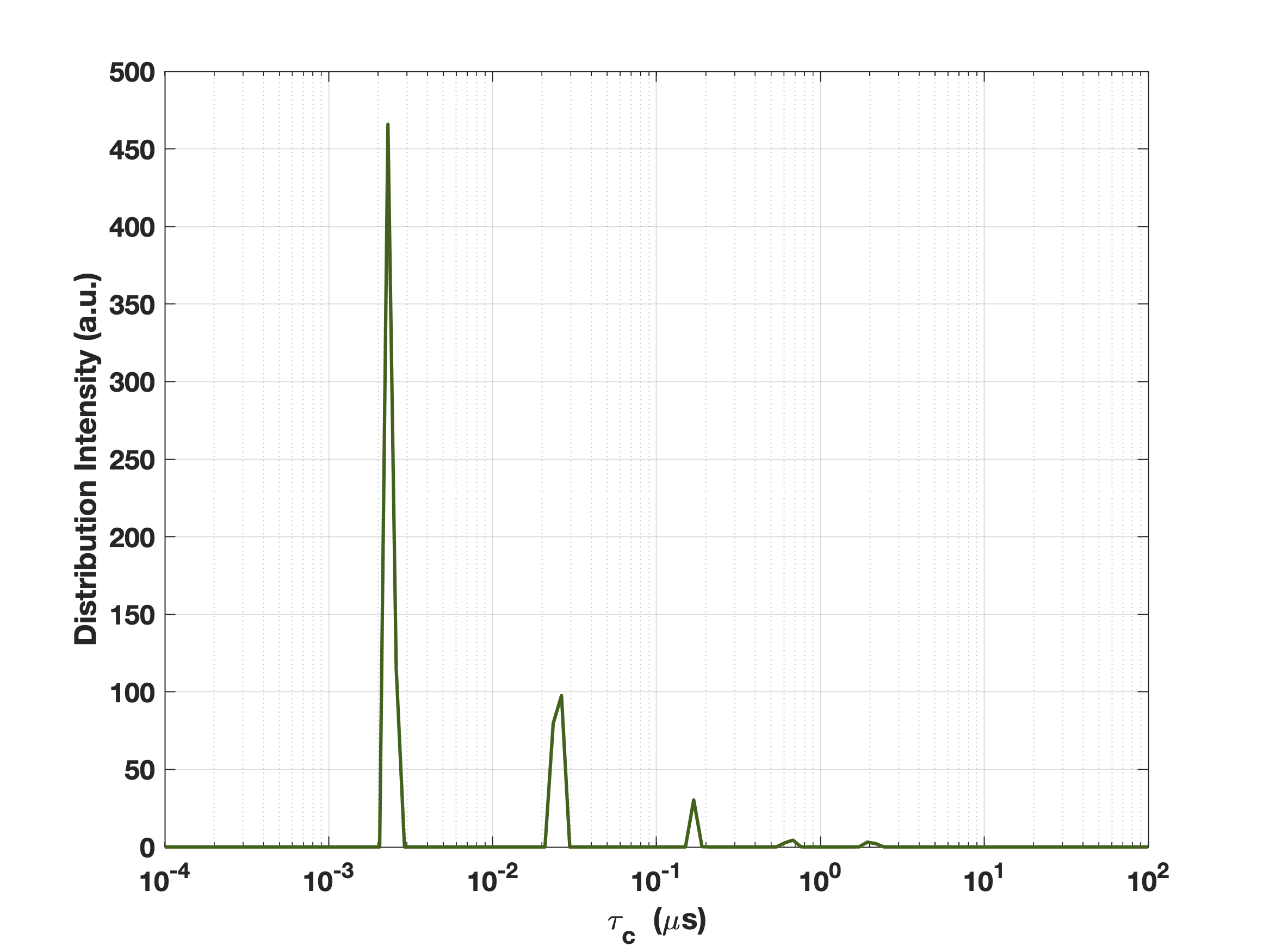}
\includegraphics[width=6cm,height=5cm]{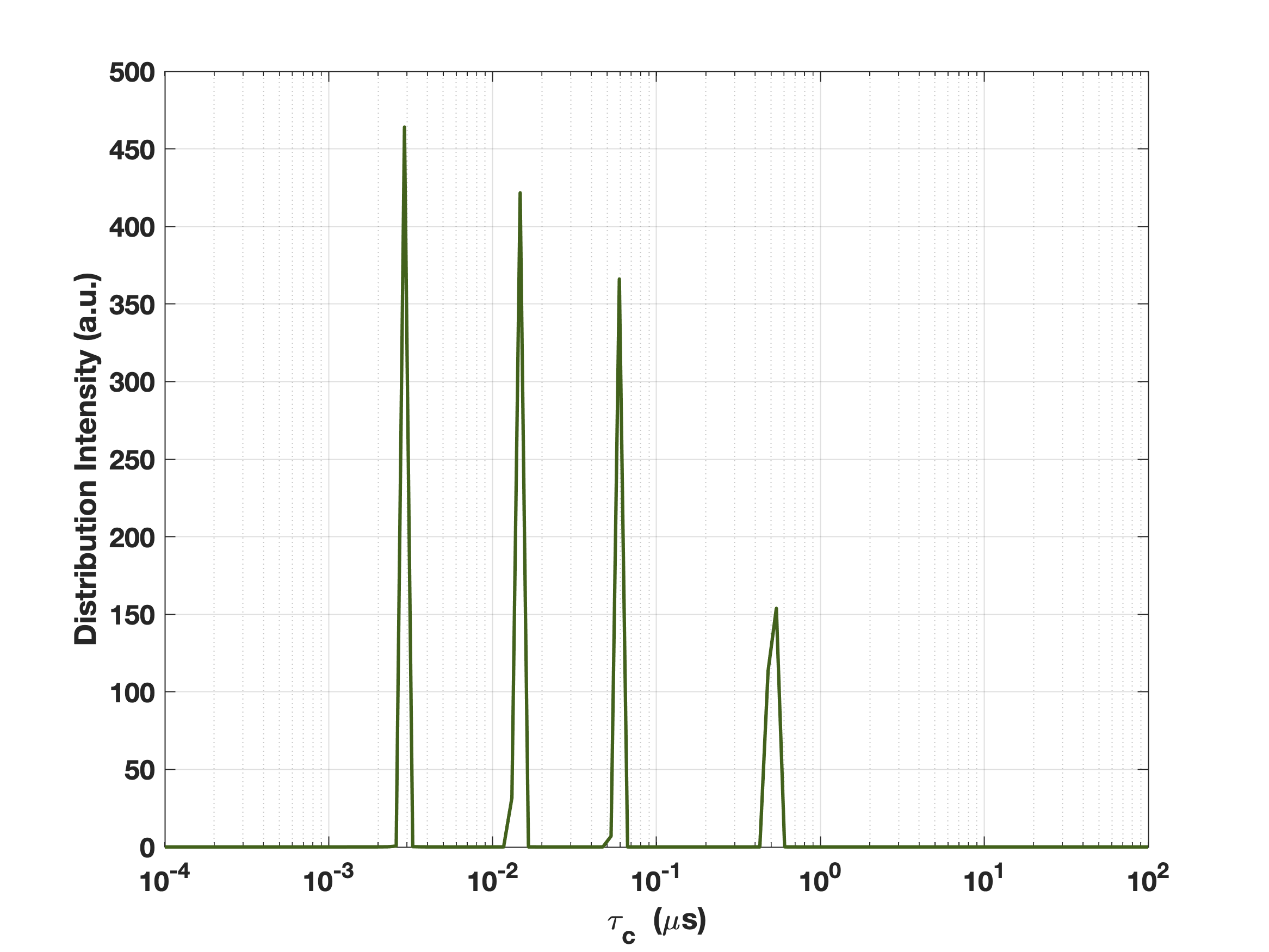}\\
(a) \hspace{5cm} (b)
\caption{Correlation distribution (dark green lines). (a) Parmigiano Reggiano sample.  (b) Dry nanosponge sample.
}
\label{fig:Ampl}
\end{figure}


Concerning the fit of the NMRD profiles we measured  the MSE reported in the last row of table  table \ref{tab:Par_data}.
The fitted NMRD profiles, represented in figure \ref{fig:fit_NMRD}, show in blue line the data and error bars while the fitted curves are represented in red  line for both samples.

\begin{figure}[h!]
\centering
\includegraphics[width=6cm,height=5cm]{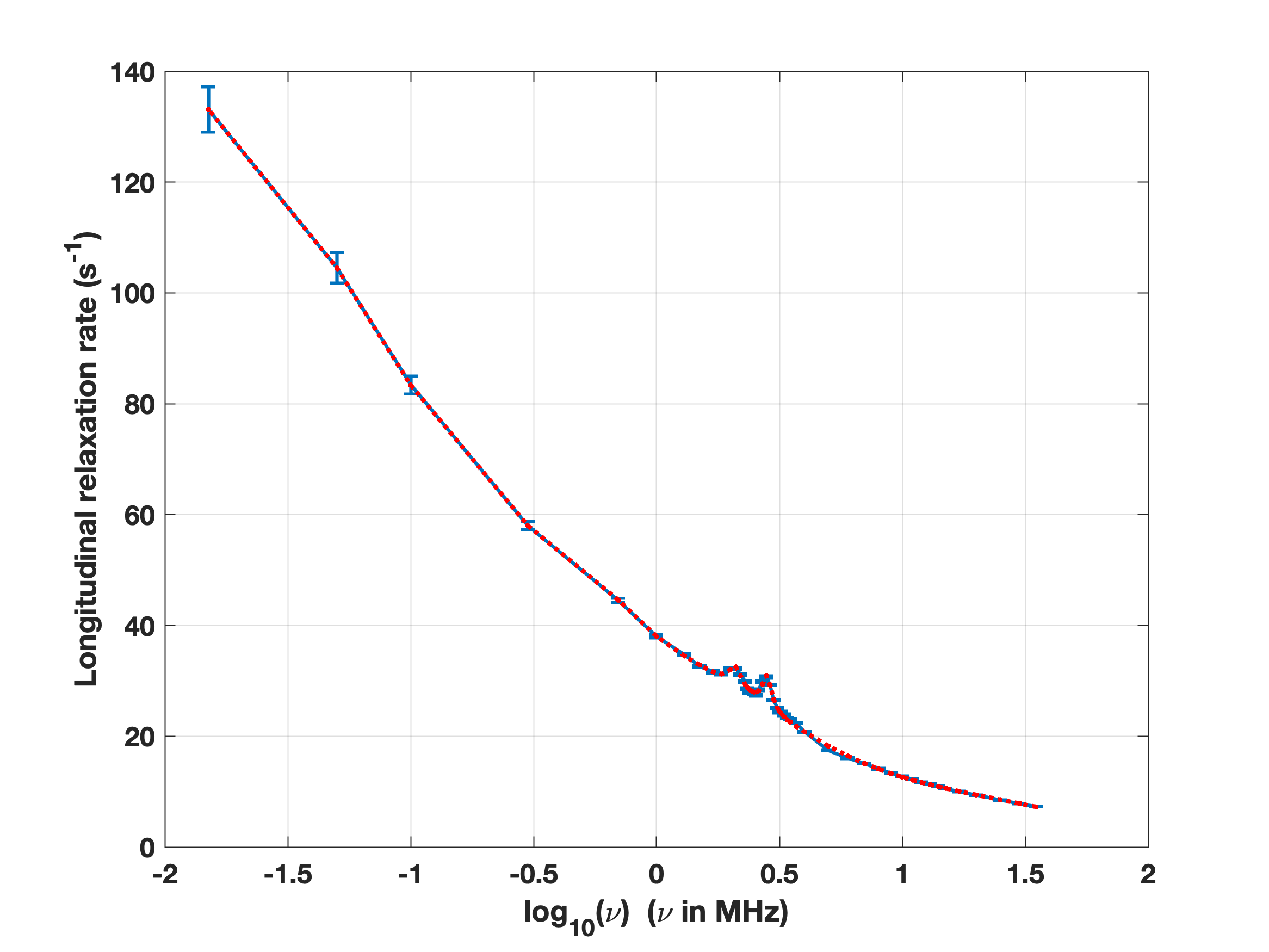}
\includegraphics[width=6cm,height=5cm]{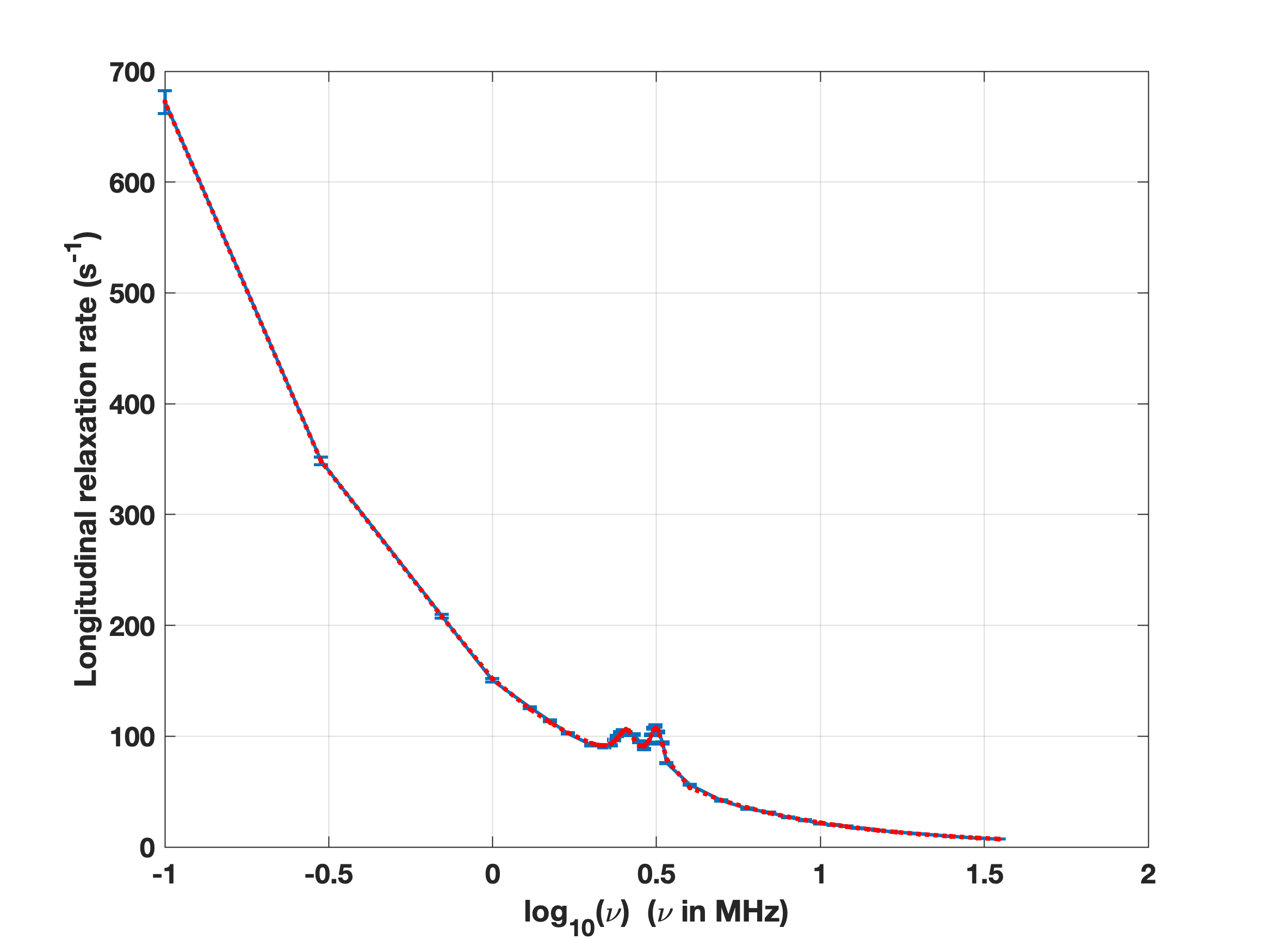}\\
(a) \hspace{5cm} (b)
\caption{NMRD data and error bars (blue lines) and fitted curve (red lines). (a) Parmigiano Reggiano sample.  (b) Dry nanosponge sample.
}
\label{fig:fit_NMRD}
\end{figure}
The zoom in the frequencies of QRE interval is shown in figure \ref{fig:zoom_fit}.
\begin{figure}[h!]
\centering
\includegraphics[width=6cm,height=5cm]{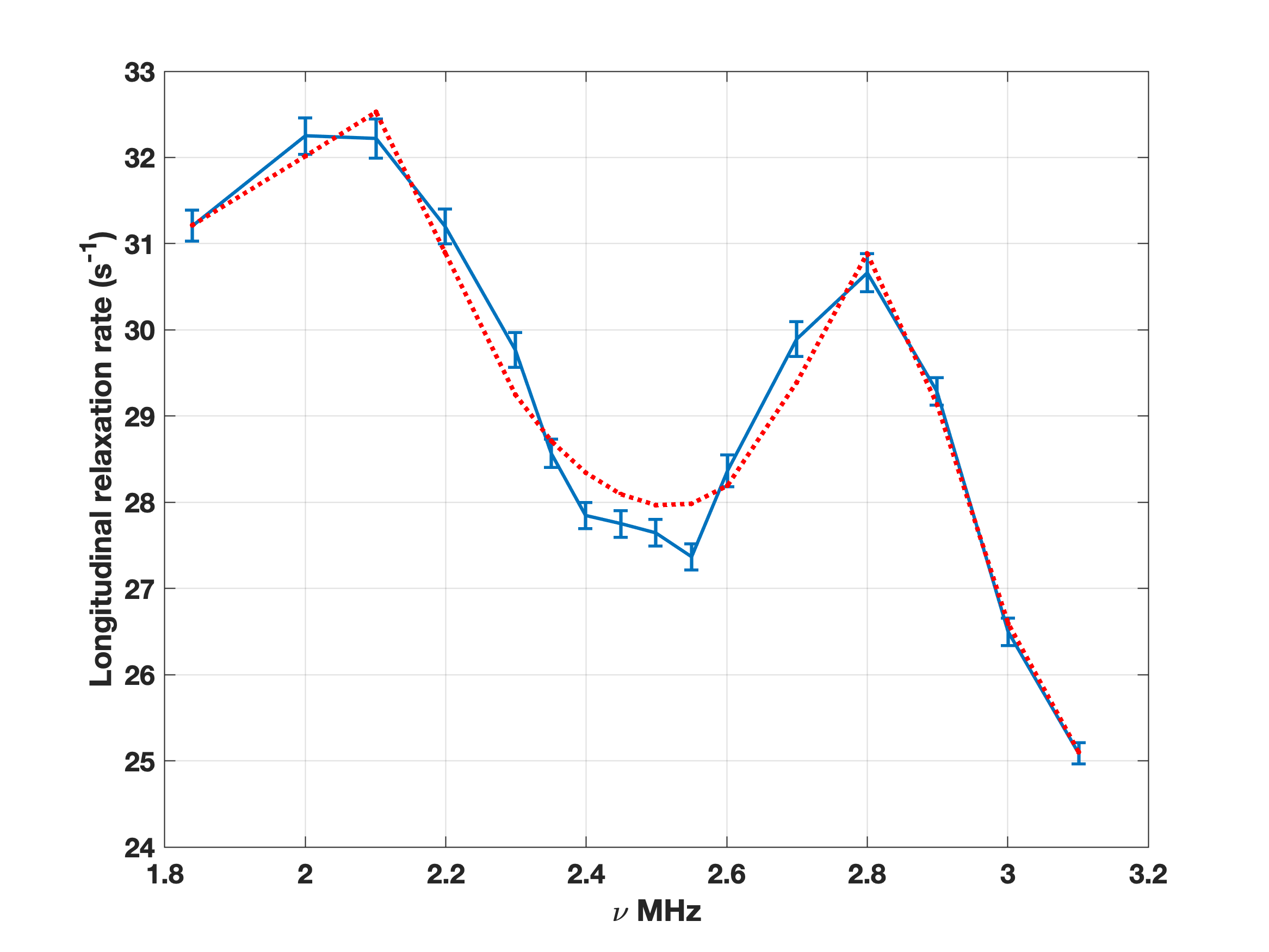}
\includegraphics[width=6cm,height=5cm]{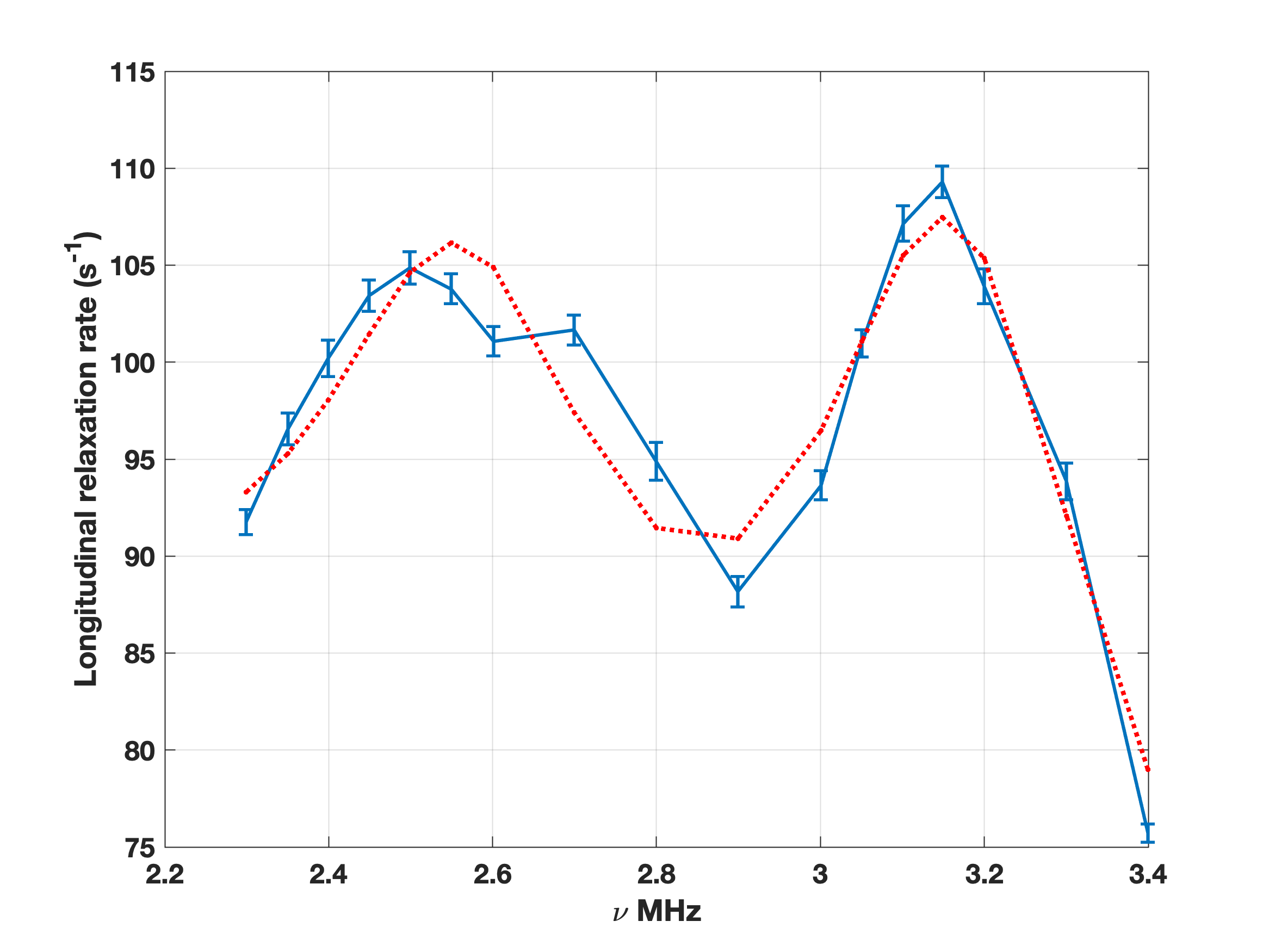}\\
(a) \hspace{5cm} (b)
\caption{Zoom of data and fitted curves in the QRE intervals.  NMRD data and error bars (blue lines) and fitted curve (red lines). (a) PR sample.  (b) DN  sample.
}
\label{fig:zoom_fit}
\end{figure}
The results confirm the excellent fit to the NMRD profile  (figure \ref{fig:fit_NMRD}) also in the QRE interval  (figure \ref{fig:zoom_fit}).

%

%
%
\section{Conclusion} \label{conclusions}
The present contribution investigates an automatic approach for analyzing the NMRD profiles in the presence of the quadrupolar relaxation enhancement. This feature yields a non-linear model whose parameters require the solution of a constrained non-linear least squares problem. Coupling the model-free approach and $L_1$ regularization, we tackle the constrained problem by a two-blocks non-linear Gauss-Seidel method.
We assess the well-posedness of the optimization problem (existence of a minimum) and the convergence of the GS iterations to a critical point.
Finally, we introduce an automatic convergent update rule of the regularization parameter based on the Balancing Principle.

The proposed algorithm is investigated both with synthetic and real data
and the results state that it is a robust, fast approach to obtain accurate
estimates of the correlation times distributions as well as modeling the quadrupolar function.

Moreover, we highlight that \Name \  can be viewed as a reference framework to construct parameter estimation procedures  
when the model parameters can be split into independent blocks  allowing the use 
of different computational approaches for each block.
In this regard, future work will include the extension of such a framework to different models of NMRD profiles where the number of correlation times $\tau$ in \eqref{eq:RHH} is assigned, and their values are to be estimated together with the corresponding component $\f(\tau)$.\\
Given the very accurate and promising results, \Name \ will be  included in the   Matlab software tool  {\tt FreeModelFFC Tool} for the inversion of NMRD profiles with QRE (available in  \url{https://site.unibo.it/softwaredicam/en/software}).


%
%


%
\section*{Acknowledgement}
G. Landi and F.  Zama were  supported by the Istituto Nazionale di Alta Matematica,  Gruppo Nazionale per il Calcolo Scientifico (INdAM-GNCS).



%
%
%
\end{document}